\newtheorem{defn}{Definition}[section]
\newtheorem{them}[defn]{Theorem}
\newtheorem{lem}[defn]{Lemma}
\newtheorem{cor}[defn]{Corollary}
\newtheorem{prop}[defn]{Proposition}
\newtheorem{rem}[defn]{Remark}
\numberwithin{equation}{section}
\newenvironment {Proof} {\noindent {\bf Proof.}}{\quad $\square$\par\vspace{3mm}}
\begin{document}
\title{Sharp upper and lower bounds for the spectral radius of a  nonnegative
weakly irreducible tensor and its applications\footnote{The research is supported by the National Natural Science Foundation of China
(Grant No. 11571123), the Guangdong Provincial Natural Science Foundation (Grant No. 2015A030313377)
and Guangdong Engineering Research Center for Data Science.
}}
\author{ Lihua You$^{a,}$\footnote{{\it{Email address:\;}}ylhua@scnu.edu.cn.}
 \qquad Xiaohua Huang$^{a,}$\footnote{{\it{Email address:\;}}983920941@qq.com. }
 \qquad Xiying Yuan$^{b,}$\footnote{{\it{Corresponding author:\;}}xiyingyuan2007@hotmail.com. }}
\vskip.2cm
\date{{\footnotesize
$^{a}$School of Mathematical Sciences, South China Normal University, Guangzhou, 510631, P.R. China\\
$^{b}$Department of Mathematics, Shanghai University, Shanghai, 200444,  P.R. China\\
}}
\maketitle

\noindent {\bf Abstract } In this paper, we obtain the sharp upper and lower
bounds for the spectral radius of a nonnegative weakly irreducible tensor. We
also apply these bounds to the adjacency spectral radius and signless Laplacian
spectral radius of a uniform hypergraph.

{\it \noindent {\bf Keywords:}} nonnegative weakly irreducible tensors; uniform hypergraph; spectral radius; bound
\section{Introduction}

\hskip.6cm In recent years, the study of tensors and the spectra of tensors (and hypergraphs) with their various applications has attracted
extensive attention and interest, since the work of L. Qi (\cite{2005Q}) and L.H. Lim (\cite{2005L}) in 2005.

Denote by  $[n]=\{1, \ldots, n\}$. As is in  \cite{2005Q}, an order $m$ dimension $n$ tensor $\mathbb{A}= (a_{i_1i_2\ldots i_m})_{1\le i_j\le n \hskip.2cm (j=1, \ldots, m)}$
over the complex field $\mathbb{C}$ is a multidimensional array with all entries
$a_{i_1i_2\ldots i_m}\in\mathbb{C}\,, i_1, \ldots, i_m\in [n].$
A tensor  $\mathbb{A}= (a_{i_1i_2\ldots i_m})$ is called a nonnegative tensor if all of its entries $a_{i_1i_2\ldots i_m}$ are nonnegative.
Let  $X=(x_1,\ldots, x_n)^T\in \mathbb{C}^n, $ $X^{[r]}=(x_1^r, x_2^r, \ldots, x_n^r)^T$,
and $\mathbb{A}X^{m-1}$ be  a vector $\in \mathbb{C}^n$ whose $i$-th component is defined as the following:
$$
(\mathbb{A}X^{m-1})_i=\sum\limits_{i_2, \ldots, i_m=1}^na_{ii_2\ldots i_m}x_{i_2}\ldots x_{i_m}.
$$
Then  a number $\lambda\in \mathbb{C}$  is called an eigenvalue of $\mathbb{A}$ if there exists a nonzero vector  $X\in \mathbb{C}^n$ such that
\begin{equation}\label{eq12}
\mathbb{A}X^{m-1}=\lambda X^{[m-1]},
\end{equation}
and in this case, $X$ is called an eigenvector of $\mathbb{A}$ corresponding to eigenvalue  $\lambda$ (see \cite{ 2008C, 2005Q, 2014Qi}).

Recently, Shao \cite{2013S} defined the general product of two $n$-dimensional tensors as follows.

\begin{defn}{\rm (\cite{2013S})} \label{defn11}
Let $\mathbb{A}$ {\rm (}and $\mathbb{B}${\rm)} be an order $m\ge2$ {\rm (}and $k\ge 1${\rm)}, dimension $n$ tensor, respectively.
Define the general product  $\mathbb{A}\cdot\mathbb{B}$ (sometimes simplified as $\mathbb{A}\mathbb{B}$),  to be the following tensor $\mathbb{D}$ of order $(m-1)(k-1)+1$ and dimension $n$:
\begin{equation}\label{F-Shao}
 d_{i\alpha_1\ldots\alpha_{m-1}}=\sum\limits_{i_2, \ldots, i_m=1}^na_{ii_2\ldots i_m}b_{i_2\alpha_1}\ldots b_{i_m\alpha_{m-1}}
\end{equation}
where $i\in[n], \, \alpha_1, \ldots, \alpha_{m-1}\in[n]^{k-1}$.
\end{defn}

The tensor product  is a generalization of the usual matrix product, and satisfies  a very useful property:
the associative law (\cite{2013S}, Theorem 1.1). 
In this paper, all the tensor product obey Formula \ref{F-Shao}. According to Formula \ref{F-Shao}, the former $\mathbb{A}X^{m-1}$ is equal to the product $\mathbb{A}X$, i.e.,
$$
(\mathbb{A}X)_i=\sum\limits_{i_2, \ldots, i_m=1}^na_{ii_2\ldots i_m}x_{i_2}\ldots x_{i_m}.
$$

Now we recall some definitions and notations of matrices and graphs.

A square matrix $A$ of order $n$ is reducible if there exists a
permutation matrix $P$ of order $n$ such that:
$$PAP^T=\left(%
\begin{array}{cc}
  B & 0 \\
  D & C \\
\end{array}%
\right)$$ where $B$ and $C$ are square non-vacuous matrices.
$A$ is irreducible if it is not reducible. 

Let $D=(V,E)$ denote  a digraph on $n$ vertices.
A  $u\rightarrow v$ {\it walk} in $D$ is a sequence
of vertices $u, u_1,\ldots, u_k=v$ and a sequence of arcs $e_1=(u,u_1),e_2=(u_1,u_2),
\ldots, e_k=(u_{k-1},v)$, where the vertices and the arcs are not necessarily distinct.
A {\it path} is a walk with distinct vertices.
A digraph $D$  is said to be {\it strongly connected} if there
exists a path from $u$ to $v$ for all $u,v\in V$.

Let $A=(a_{ij})$ be a nonnegative square matrix of order $n$. The
{\it associated digraph} $D(A)=(V,E)$ of $A$ (possibly with loops) is defined
to be the digraph with vertex set $V=\{1,2,\ldots,n\}$ and arc set $E=\{(i,j)\hskip.1cm | \hskip.1cm a_{ij}> 0\}$.

The following well-known  theorem   gives the relationship between irreducibllity and strongly connectedness.

\begin{them}{\rm (\cite{1979A})} \label{thm13}
 A nonnegative matrix $A$ is irreducible if and only if its associated directed graph $D(A)$ is strongly connected.
\end{them}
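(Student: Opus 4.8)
The plan is to prove both implications by contraposition, using the elementary fact that in a digraph a vertex $v$ is reachable from $u$ by a walk if and only if it is reachable by a path; hence strong connectedness is equivalent to the existence of a walk between every ordered pair of vertices. Throughout I will exploit the identity $(PAP^T)_{ij}=a_{\sigma(i)\sigma(j)}$ for the permutation matrix $P$ of a permutation $\sigma$, which translates block structure into statements about arcs in $D(A)$.

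First, for the implication ``irreducible $\Rightarrow$ strongly connected'' I would prove the contrapositive: if $D(A)$ is not strongly connected, then $A$ is reducible. I would fix an ordered pair $u,v\in V$ admitting no $u\to v$ walk, and set $R\subseteq V$ to be the set of all vertices reachable from $u$, with $u\in R$ via the trivial walk. Then $u\in R$ and $v\notin R$, so $R$ is a nonempty proper subset, say $|R|=k$ with $1\le k\le n-1$. The load-bearing observation is that $R$ is closed under outgoing arcs: if $i\in R$ and $(i,j)\in E$, then appending the arc $(i,j)$ to a $u\to i$ walk yields a $u\to j$ walk, so $j\in R$. Consequently $a_{ij}=0$ for every $i\in R$ and $j\in V\setminus R$. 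Choosing a permutation $\sigma$ that lists the elements of $R$ first and those of $V\setminus R$ afterwards, the matrix $PAP^T$ takes the block form $\begin{pmatrix} B & 0 \\ D & C \end{pmatrix}$ with $B$ of order $k$ and $C$ of order $n-k$, both non-vacuous, the upper-right block vanishing exactly because $a_{ij}=0$ for $i\in R,\ j\notin R$. Thus $A$ is reducible.

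Conversely, for ``strongly connected $\Rightarrow$ irreducible'' I would again argue contrapositively: assuming $A$ reducible, I would exhibit a pair of vertices with no connecting walk. By definition there is a permutation matrix $P$ with $PAP^T=\begin{pmatrix} B & 0 \\ D & C \end{pmatrix}$, $B$ of order $k$ and $C$ of order $n-k$, where $1\le k\le n-1$. Writing $S_1$ for the original indices sent by $\sigma$ into the first block and $S_2$ for those sent into the second, the vanishing of the upper-right block gives $a_{ij}=0$, i.e. $(i,j)\notin E$, for all $i\in S_1,\ j\in S_2$. Hence no arc leaves $S_1$ for $S_2$, so any walk starting in $S_1$ remains in $S_1$; taking $u\in S_1$ and $v\in S_2$ (both nonempty) there is no $u\to v$ walk, and therefore $D(A)$ is not strongly connected.

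The argument requires no estimates, and I do not expect a serious obstacle. The only points demanding care are bookkeeping ones: matching the arc convention $(i,j)\in E\Leftrightarrow a_{ij}>0$ to the correct upper-right zero block, and constructing $P$ explicitly from the vertex partition. The genuinely essential step is the reachable-set construction, since verifying that $R$ is closed under outgoing arcs is precisely what converts ``not strongly connected'' into an honest block-triangular form, and reading the zero block as ``no arc escapes $S_1$'' is what converts reducibility back into a reachability obstruction.
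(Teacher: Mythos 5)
The paper states this theorem without proof, citing \cite{1979A}, so there is no in-paper argument to compare against; judged on its own, your proof is correct and is exactly the standard textbook argument from the cited source: the reachable set $R$, closed under outgoing arcs, produces the zero upper-right block after conjugation by the permutation listing $R$ first, and conversely the zero block traps every walk starting in $S_1$ inside $S_1$, blocking reachability into $S_2$. The only conventions worth pinning down explicitly are the status of trivial walks (so that $u\in R$ and strong connectedness for $u=v$ are unambiguous) and the degenerate case $n=1$, where the paper's definition of reducibility cannot apply since it demands two non-vacuous square blocks; neither affects the substance of your argument.
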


In \cite{Friedland2013, 2011YY}, the weak irreducibility of tensors was defined and studied.

\begin{defn}{\rm (\cite{Friedland2013, 2011YY})}\label{defn13}
Let $\mathbb{A}$ be an  order $m$  dimensional $n$ tensor (not necessarily nonnegative).
If there exists a nonempty proper subset $I$ of the set $[n]$, such that
\begin{equation}\label{eq101}
a_{i_1i_2\ldots i_m}=0 \hskip.2cm (\forall  i_1\in I \mbox{ and  at least one of } i_2,\ldots, i_m\not\in I),
\end{equation}
 then $\mathbb{A}$ is called weakly reducible (or sometimes $I$-weakly reducible).
 If $\mathbb{A}$ is not weakly reducible, then $\mathbb{A}$ is called weakly irreducible.
\end{defn}

The following statement is an alternative explanation of weak irreducibility.

\begin{defn}{\rm (\cite{Friedland2013}\cite{2014S})} \label{defn14}
Suppose that $\mathbb{A}=(a_{i_1i_2\ldots i_m})_{1\le i_j\le n \hskip.2cm (j=1, \ldots, m)}$ is a nonnegative tensor of order $m$ and dimension $n$.
 We call a nonnegative matrix $G(\mathbb{A})$ the representation associated matrix to the nonnegative tensor $\mathbb{A}$,
if the ($i$, $j$)-th entry of  $G(\mathbb{A})$ is defined to be the summation of  $a_{ii_2\ldots i_m}$ 
with indices $\{i_2,\ldots i_m\}\ni j$.
 We call the tensor $\mathbb{A}$ weakly reducible if its representation $G(\mathbb{A})$ is a reducible matrix.
\end{defn}

By Definition \ref{defn14} and Theorem \ref{thm13}, we obtain the following proposition.

\begin{prop}\label{prop15}
Let $\mathbb{A}$ be a nonnegative  tensor of order $m$ and dimension $n$,
$G(\mathbb{A})$ be the representation associated matrix to  $\mathbb{A}$,
and $D(G(\mathbb{A}))$ be the associated directed graph of $G(\mathbb{A})$.
Then the following three conditions are equivalent:

\noindent {\rm (i).}  $\mathbb{A}$ is weakly irreducible.

\noindent {\rm (ii).}  $G(\mathbb{A})$  is  irreducible.

\noindent {\rm (iii).}  $D(G(\mathbb{A}))$ is strongly connected.
\end{prop}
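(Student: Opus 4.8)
The plan is to split the claimed equivalence into the two implications (ii) $\Leftrightarrow$ (iii) and (i) $\Leftrightarrow$ (ii). Since $G(\mathbb{A})$ is, by its very construction in Definition \ref{defn14}, a nonnegative square matrix, the equivalence of (ii) and (iii) is nothing but Theorem \ref{thm13} applied to the matrix $G(\mathbb{A})$: a nonnegative matrix is irreducible exactly when its associated digraph is strongly connected. So the substance of the proposition is (i) $\Leftrightarrow$ (ii), and I would establish it in the contrapositive form ``$\mathbb{A}$ is weakly reducible $\iff$ $G(\mathbb{A})$ is a reducible matrix,'' which upon negation yields (i) $\Leftrightarrow$ (ii).

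To prove this contrapositive I would work with a single candidate witness set. Fix a nonempty proper subset $I \subsetneq [n]$. Reading off the block form in the definition of a reducible matrix, $G(\mathbb{A})$ is reducible with $I$ indexing the first diagonal block precisely when the off-diagonal block vanishes, i.e. $G(\mathbb{A})_{ij} = 0$ for all $i \in I$ and all $j \notin I$. By Definition \ref{defn14}, $G(\mathbb{A})_{ij} = \sum_{\{i_2,\ldots,i_m\}\ni j} a_{ii_2\ldots i_m}$ is a sum of nonnegative terms, so its vanishing is equivalent to the vanishing of every summand; hence $G(\mathbb{A})_{ij}=0$ is equivalent to $a_{ii_2\ldots i_m}=0$ for every tuple $(i_2,\ldots,i_m)$ in which $j$ occurs.

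Next I would collect these pointwise statements over all $j \notin I$. For a fixed $i \in I$, requiring $G(\mathbb{A})_{ij}=0$ for every $j \notin I$ is exactly the demand that $a_{ii_2\ldots i_m}=0$ for every tuple $(i_2,\ldots,i_m)$ containing at least one index outside $I$---because such a tuple is caught by the index $j \notin I$ it contains, and conversely every constraint arises from some $j \notin I$ lying in the tuple. This is verbatim the defining condition of $I$-weak reducibility in Definition \ref{defn13}. Since the same subset $I$ serves as a witness on both sides, $G(\mathbb{A})$ is reducible if and only if $\mathbb{A}$ is weakly reducible, completing (i) $\Leftrightarrow$ (ii).

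The only step that uses any hypothesis is the passage from $\sum a_{ii_2\ldots i_m}=0$ to the vanishing of each entry, and this is exactly where nonnegativity of $\mathbb{A}$ is essential---without it cancellation could make $G(\mathbb{A})_{ij}$ vanish while individual entries do not, and the two notions of reducibility would diverge. Apart from this, the argument is a direct unwinding of the three definitions, and I expect the only real care to lie in keeping the common witness set $I$ aligned across the tensor condition of Definition \ref{defn13} and the matrix-block condition, rather than in any genuine mathematical obstacle.
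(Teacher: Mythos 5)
Your proof is correct and follows the same route the paper takes, which is simply the remark that Proposition \ref{prop15} follows ``by Definition \ref{defn14} and Theorem \ref{thm13}'': the equivalence (ii)$\Leftrightarrow$(iii) is Theorem \ref{thm13} applied to the nonnegative matrix $G(\mathbb{A})$, and (i)$\Leftrightarrow$(ii) is the agreement of Definitions \ref{defn13} and \ref{defn14}. The only difference is that you actually verify this agreement---matching the witness set $I$ in the block form of a reducible matrix with the set $I$ in the entrywise condition (\ref{eq101}), and correctly isolating nonnegativity as the step that lets $G(\mathbb{A})_{ij}=0$ force each summand $a_{ii_2\ldots i_m}$ to vanish---whereas the paper leaves this unwinding implicit.
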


The spectral radius of $\mathbb{A}$ is defined as
$$\rho(\mathbb{A})=\max\{|\lambda|:\lambda ~\mbox{ is an eigenvalue of }\mathbb{A}\}.$$

 \begin{lem}\label{P-F theorem}
 Let $\mathbb{A}$ be a nonnegative tensor.

\noindent{\rm (1). (\cite{2010Y}, Lemma 5.5) } If some eigenvalue of $\mathbb{A}$ has a positive eigenvector corresponding to it,
 then this eigenvalue must be $\rho(\mathbb{A})$.

\noindent{\rm (2). (\cite{Friedland2013}) } If $\mathbb{A}$ is weakly irreducible, then $\rho(\mathbb{A})$ has a positive eigenvector.
\end{lem}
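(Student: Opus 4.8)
The plan is to treat the two parts separately, since part (1) is an elementary comparison argument while part (2) is the genuine existence-and-positivity content of the Perron--Frobenius theory for tensors.

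For part (1), suppose $\mathbb{A}X^{m-1}=\lambda X^{[m-1]}$ with $X>0$. First I would record that $\lambda$ is real and nonnegative by reading off the $i$-th coordinate $\lambda x_i^{m-1}=(\mathbb{A}X^{m-1})_i\ge 0$ and using $x_i^{m-1}>0$. To show $\lambda$ dominates every eigenvalue, let $\mu$ be any eigenvalue with eigenvector $Y\neq 0$. Taking moduli coordinatewise and using $\mathbb{A}\ge 0$ with the triangle inequality gives $|\mu|\,|y_i|^{m-1}\le(\mathbb{A}|Y|^{m-1})_i$, where $|Y|=(|y_1|,\ldots,|y_n|)^T$. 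Setting $t=\max_i |y_i|/x_i>0$, attained at some $i_0$, one has $|Y|\le tX$ coordinatewise, so the monotonicity of $u\mapsto\mathbb{A}u^{m-1}$ on the nonnegative cone together with its homogeneity of degree $m-1$ yields $(\mathbb{A}|Y|^{m-1})_{i_0}\le t^{m-1}(\mathbb{A}X^{m-1})_{i_0}=t^{m-1}\lambda x_{i_0}^{m-1}$. Comparing the two estimates at $i_0$, where $|y_{i_0}|=t x_{i_0}$, forces $|\mu|\le\lambda$. Since this holds for every eigenvalue, $\rho(\mathbb{A})\le\lambda$, and as $\lambda$ is itself an eigenvalue, $\lambda=\rho(\mathbb{A})$.

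For part (2), I would first produce a nonnegative eigenvector for $\rho(\mathbb{A})$ and then upgrade it to a positive one. For existence, perturb to $\mathbb{A}_\varepsilon=\mathbb{A}+\varepsilon\mathbb{E}$ with $\mathbb{E}$ the all-ones tensor, so that $(\mathbb{A}_\varepsilon u^{m-1})_i\ge\varepsilon>0$ for every $u$ on the simplex $\Delta=\{u\ge 0:\sum_i u_i=1\}$. The normalized map $T_\varepsilon(u)=(\mathbb{A}_\varepsilon u^{m-1})^{[1/(m-1)]}/\|(\mathbb{A}_\varepsilon u^{m-1})^{[1/(m-1)]}\|_1$ is then continuous from the compact convex set $\Delta$ to itself, and Brouwer's fixed-point theorem provides a positive eigenvector $X_\varepsilon$ of $\mathbb{A}_\varepsilon$, whose eigenvalue must be $\rho(\mathbb{A}_\varepsilon)$ by part (1). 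Letting $\varepsilon\to 0$ along a subsequence, compactness of $\Delta$ and continuity of the spectral radius produce a limit $X\ge 0$, $X\neq 0$, satisfying $\mathbb{A}X^{m-1}=\rho(\mathbb{A})X^{[m-1]}$.

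The hard part is showing $X>0$, and this is exactly where weak irreducibility enters through Proposition \ref{prop15}: the digraph $D(G(\mathbb{A}))$ is strongly connected. A naive one-step propagation does not suffice, and it is worth being explicit about why: if $I=\{i:x_i=0\}$ were nonempty and proper, the eigenequation only forces, for $i\in I$, that every positive entry $a_{ii_2\ldots i_m}$ carries at least one index in $I$, which is strictly weaker than $I$ being forward-closed in $D(G(\mathbb{A}))$ and hence does not by itself contradict weak irreducibility. The clean way I would close this gap is the Hilbert projective-metric (bounded-oscillation) mechanism: strong connectivity of the dependency graph of the order-preserving, degree-one homogeneous map $u\mapsto(\mathbb{A}u^{m-1})^{[1/(m-1)]}$ bounds the ratio $\max_i (X_\varepsilon)_i/\min_i (X_\varepsilon)_i$ uniformly in $\varepsilon$, so no coordinate can collapse to $0$ in the limit, giving $X>0$. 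I expect this positivity step to be the main obstacle; it is the real substance of the cited nonlinear Perron--Frobenius theory, whereas the existence of the eigenvector and the identification of its eigenvalue with $\rho(\mathbb{A})$ are comparatively routine.
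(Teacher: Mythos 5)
The paper does not actually prove this lemma: both parts are imported verbatim, part (1) cited to Yang--Yang (Lemma 5.5) and part (2) to Friedland--Gaubert--Han, so there is no internal proof to compare your attempt against; it can only be judged against those sources, and it holds up. Your part (1) is complete and is essentially the standard comparison argument of the cited Lemma 5.5: taking moduli, scaling by $t=\max_i|y_i|/x_i$, and comparing at the maximizing coordinate via monotonicity and $(m-1)$-homogeneity of $u\mapsto\mathbb{A}u^{m-1}$ is exactly right. Your part (2) also matches the architecture of the actual literature proof: Friedland--Gaubert--Han pass to the order-preserving, degree-one homogeneous map $u\mapsto(\mathbb{A}u^{m-1})^{[1/(m-1)]}$ and invoke the nonlinear Perron--Frobenius theorem of Gaubert--Gunawardena for maps with strongly connected graph, whose engine is precisely the bounded-oscillation (Hilbert seminorm / slice-space) estimate you name. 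Your caution about the naive zero-set propagation is well founded: for $m=3$, $n=2$, the tensor with $a_{112}=a_{211}=1$ and all other entries zero is weakly irreducible, yet $(0,1)^T$ is an eigenvector for the eigenvalue $0$, so positivity of a nonnegative eigenvector cannot follow from the zero pattern of the eigenequation alone; it genuinely uses that the eigenvalue is $\rho(\mathbb{A})$ (or, in your scheme, the uniform bound).

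Two steps you assert rather than prove should be written out, and both are short. For ``continuity of the spectral radius,'' use monotonicity of $\rho$ on nonnegative tensors together with Lemma \ref{lem18}: $\rho(\mathbb{A})\le\rho(\mathbb{A}_\varepsilon)\le\max_i r_i(\mathbb{A})+\varepsilon n^{m-1}$, so along a subsequence $\rho(\mathbb{A}_{\varepsilon_k})\to\lambda_*\ge\rho(\mathbb{A})$, and passing to the limit in the eigenequation shows $\lambda_*$ is an eigenvalue of $\mathbb{A}$, whence $\lambda_*=\rho(\mathbb{A})$ by definition of the spectral radius. For the uniform ratio bound, set $\delta_\varepsilon=\min_l (X_\varepsilon)_l>0$ and $r_i=(X_\varepsilon)_i/\delta_\varepsilon\ge1$. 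If $i\to j$ is an arc of $D(G(\mathbb{A}))$, realized by an entry $a_{ii_2\ldots i_m}>0$ with $j\in\{i_2,\ldots,i_m\}$, then
\[
\rho(\mathbb{A}_\varepsilon)(X_\varepsilon)_i^{m-1}=(\mathbb{A}_\varepsilon X_\varepsilon^{m-1})_i\ge(\mathbb{A}X_\varepsilon^{m-1})_i\ge a_{ii_2\ldots i_m}(X_\varepsilon)_j\,\delta_\varepsilon^{\,m-2},
\]
and dividing by $\delta_\varepsilon^{\,m-1}$ gives $r_i^{m-1}\ge c\,r_j$ with $c=a_{ii_2\ldots i_m}/\rho(\mathbb{A}_\varepsilon)$ bounded below uniformly for $\varepsilon\le\varepsilon_0$. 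Note that normalizing by $\delta_\varepsilon$ is exactly what defuses the circularity you flagged: the $m-2$ ``other'' small factors are lower-bounded by $\delta_\varepsilon$ and absorbed. Chaining this inequality along a path of length at most $n$ (which exists by Proposition \ref{prop15}) from the minimizing coordinate, where $r=1$, to the maximizing one yields $\max_i r_i\le C$ with $C$ depending only on $m$, $n$, the nonzero entries of $\mathbb{A}$ and the upper bound on $\rho(\mathbb{A}_\varepsilon)$, not on $\varepsilon$; on the simplex this gives $\min_i(X_\varepsilon)_i\ge 1/(nC)$, so the limit vector is positive. With these two insertions your outline is a complete and correct proof.
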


  Let $\mathbb{A}$ be a tensor of order $m$ and dimension $n$. The $i$-th row sum
 of $\mathbb{A}$ is defined as $$r_i(\mathbb{A})=\sum\limits_{i_2,\ldots,i_m=1}^{n}a_{ii_2\ldots i_m}.$$

\begin{lem}{\rm (\cite{2010Y})} \label{lem18}
  Let $\mathbb{A}$ be a nonnegative tensor of dimension $n$. We have
\begin{equation}\label{eq13}
\min\limits_{1\leq i\leq n}r_i(\mathbb{A})\leq \rho(\mathbb{A}) \leq \max\limits_{1\leq i\leq n}r_i(\mathbb{A}).
\end{equation}
Moreover, if $\mathbb{A}$ is weakly irreducible, then one of the equalities in
(\ref{eq13}) holds if and only if $r_i(\mathbb{A})=\ldots=r_n(\mathbb{A})$.
\end{lem}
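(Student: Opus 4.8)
The plan is to treat the inequalities and the equality characterization separately. For the inequalities \eqref{eq13}, I would take the all-ones vector $\mathbf{1}=(1,\ldots,1)^T$ and observe that $(\mathbb{A}\mathbf{1}^{m-1})_i=r_i(\mathbb{A})$ for every $i$, so that $\min_i r_i(\mathbb{A})\cdot x_i^{m-1}\le (\mathbb{A}\mathbf{1}^{m-1})_i\le \max_i r_i(\mathbb{A})\cdot x_i^{m-1}$ holds with $x_i=1$. The standard nonnegative-tensor estimate (that $(\mathbb{A}X^{m-1})_i\le c\,x_i^{m-1}$ for all $i$ and some $X>0$ forces $\rho(\mathbb{A})\le c$, and dually for the lower bound) then gives \eqref{eq13} at once. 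The easy direction of the equality claim is ``$\Leftarrow$'': if $r_1(\mathbb{A})=\cdots=r_n(\mathbb{A})=r$, then $\mathbb{A}\mathbf{1}^{m-1}=r\,\mathbf{1}^{[m-1]}$, so $\mathbf{1}>0$ is a positive eigenvector; by Lemma~\ref{P-F theorem}(1) its eigenvalue equals $\rho(\mathbb{A})$, whence $\rho(\mathbb{A})=r=\min_i r_i(\mathbb{A})=\max_i r_i(\mathbb{A})$ and both equalities hold.

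The substantive direction is ``$\Rightarrow$''. Assume $\mathbb{A}$ is weakly irreducible, so by Lemma~\ref{P-F theorem}(2) there is a positive eigenvector $X>0$ with $\mathbb{A}X^{m-1}=\rho(\mathbb{A})X^{[m-1]}$. Suppose the upper equality holds, $\rho(\mathbb{A})=\max_i r_i(\mathbb{A})=:M$ (the lower case is symmetric, with minima replacing maxima and the inequalities reversed). Let $x_p=\max_i x_i$ and set $S=\{i:x_i=x_p\}$. For each $i\in S$ I would expand the eigenequation and bound every monomial by $x_{i_2}\cdots x_{i_m}\le x_p^{m-1}$ to obtain
\[
\rho(\mathbb{A})\,x_p^{m-1}=(\mathbb{A}X^{m-1})_i\le r_i(\mathbb{A})\,x_p^{m-1}\le M\,x_p^{m-1}=\rho(\mathbb{A})\,x_p^{m-1},
\]
which forces equality throughout. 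This yields two facts: $r_i(\mathbb{A})=M$ for every $i\in S$, and whenever $a_{ii_2\ldots i_m}>0$ one must have $x_{i_2}=\cdots=x_{i_m}=x_p$, i.e. $i_2,\ldots,i_m\in S$.

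The key step is to convert this ``closedness'' of $S$ into a statement about the representation matrix $G(\mathbb{A})$ and then invoke strong connectedness. By Definition~\ref{defn14}, $(G(\mathbb{A}))_{ij}=\sum_{\{i_2,\ldots,i_m\}\ni j}a_{ii_2\ldots i_m}$; if $i\in S$ and $j\notin S$, every term of this sum has $a_{ii_2\ldots i_m}>0$ with some index equal to $j\notin S$, which the previous paragraph forbids, so $(G(\mathbb{A}))_{ij}=0$. Hence $D(G(\mathbb{A}))$ has no arc from $S$ to $[n]\setminus S$. If $S$ were a proper nonempty subset of $[n]$, this would contradict the strong connectedness of $D(G(\mathbb{A}))$ guaranteed by Proposition~\ref{prop15}; therefore $S=[n]$, i.e. all $x_i$ are equal. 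Substituting this constant eigenvector back into $\mathbb{A}X^{m-1}=\rho(\mathbb{A})X^{[m-1]}$ gives $r_i(\mathbb{A})=\rho(\mathbb{A})$ for all $i$, so $r_1(\mathbb{A})=\cdots=r_n(\mathbb{A})$.

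I expect the main obstacle to be precisely this propagation step: carefully extracting from the term-by-term equality that the maximum-component set $S$ is closed under the tensor's support, and translating that into the absence of arcs leaving $S$ in $D(G(\mathbb{A}))$ so that weak irreducibility (via Proposition~\ref{prop15}) can be applied. By contrast, the inequality part is routine once the all-ones vector is used.
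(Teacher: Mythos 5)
Your proof is correct, but note that the paper itself gives no proof of Lemma~\ref{lem18}: it is quoted verbatim from \cite{2010Y}, so there is no in-paper argument to compare against, and your write-up reconstructs the standard proof. Two comments. First, in the inequality part the upper Collatz--Wielandt estimate you import (namely that $(\mathbb{A}X^{m-1})_i\le c\,x_i^{m-1}$ for all $i$ and some $X>0$ forces $\rho(\mathbb{A})\le c$) is elementary---evaluate the eigenequation of any eigenpair at a coordinate of maximal modulus---but the dual lower estimate for a \emph{general} (possibly weakly reducible) nonnegative tensor is genuinely nontrivial: without weak irreducibility $\rho(\mathbb{A})$ need not have a positive eigenvector, and the usual proof perturbs to $\mathbb{A}+\varepsilon\mathbb{J}$ and passes to a limit. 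Since you invoke it as a quoted standard fact rather than prove it, this is a citation rather than a gap, and it is consistent with how the paper treats the whole lemma. Second, your equality analysis is sound---the forcing argument at a maximal coordinate correctly yields both $r_i(\mathbb{A})=M$ on $S$ and closedness of $S$ under the support of $\mathbb{A}$, and the minimum case is genuinely symmetric---but the detour through $G(\mathbb{A})$ and Proposition~\ref{prop15} is unnecessary: the closedness property ``$i_1\in S$ and $a_{i_1i_2\ldots i_m}\neq 0$ imply $i_2,\ldots,i_m\in S$'' is exactly the condition in Definition~\ref{defn13} exhibiting $\mathbb{A}$ as $S$-weakly reducible, so a proper nonempty $S$ contradicts weak irreducibility directly. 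Your longer route via strong connectedness of $D(G(\mathbb{A}))$ is nevertheless valid, and it is the same mechanism the paper itself deploys in the equality analysis of Theorem~\ref{main result 1}, so either phrasing is acceptable.
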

\begin{defn}{\rm (\cite{2013S})} \label{defn19}
Let $\mathbb{A}$ and $\mathbb{B}$ be two order $m$ dimension $n$ tensors. We say
that $\mathbb{A}$ and $\mathbb{B}$ are diagonal similar, if there exists some invertible diagonal matrix $D$ of order $n$ such that
$\mathbb{B}=D^{-(m-1)}\mathbb{A}D.$
\end{defn}
\begin{lem}{\rm (\cite{2013S})} \label{lem110}
Suppose that the two tensors $\mathbb{A}$ and $\mathbb{B}$ are diagonal
similar, namely $\mathbb{B}=D^{-(m-1)}\mathbb{A}D$ for some invertible diagonal matrix $D$.
Then $x$ is an eigenvector of  $\mathbb{B}$  corresponding to the eigenvalue $\lambda$
if and only if $y=Dx$ is an eigenvalues of $\mathbb{A}$ corresponding to the same eigenvalue $\lambda$.
\end{lem}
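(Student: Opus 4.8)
The plan is to reduce everything to an explicit formula for the entries of $\mathbb{B}$ and then manipulate the eigenvalue equation directly. First I would compute the entries of $\mathbb{B}=D^{-(m-1)}\mathbb{A}D$ using the general product in Definition \ref{defn11}. Writing $D=\mathrm{diag}(d_1,\dots,d_n)$ and treating $D^{-(m-1)}=\mathrm{diag}(d_1^{-(m-1)},\dots,d_n^{-(m-1)})$ together with $D$ as order-$2$ tensors, the associative law for the product lets me evaluate $D^{-(m-1)}(\mathbb{A}D)$ unambiguously. Because a diagonal matrix contributes only on its diagonal, the multi-index sums in Formula \ref{F-Shao} collapse: postmultiplication by $D$ scales the $j$-th index by the corresponding diagonal entry, while premultiplication by $D^{-(m-1)}$ scales the whole $i$-th row by $d_i^{-(m-1)}$. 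I therefore expect to obtain
\[
b_{i i_2\ldots i_m}=d_i^{-(m-1)}\,d_{i_2}\cdots d_{i_m}\,a_{i i_2\ldots i_m}
\]
for all $i,i_2,\ldots,i_m\in[n]$.

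Next I would write out the eigenvalue equation $\mathbb{B}x^{m-1}=\lambda x^{[m-1]}$ componentwise. Substituting the entry formula into the $i$-th component gives
\[
d_i^{-(m-1)}\sum_{i_2,\ldots,i_m=1}^n a_{i i_2\ldots i_m}\,(d_{i_2}x_{i_2})\cdots(d_{i_m}x_{i_m})=\lambda x_i^{m-1}.
\]
Setting $y=Dx$, so that $y_j=d_jx_j$, and multiplying through by $d_i^{m-1}$, the left side becomes $(\mathbb{A}y^{m-1})_i$ while the right side becomes $\lambda(d_ix_i)^{m-1}=\lambda y_i^{m-1}$. Hence the $i$-th component identity is exactly $(\mathbb{A}y^{m-1})_i=\lambda y_i^{m-1}$, and ranging over all $i$ yields $\mathbb{A}y^{m-1}=\lambda y^{[m-1]}$. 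Every step in this chain is a reversible equivalence, so $x$ satisfies the eigenequation for $\mathbb{B}$ precisely when $y=Dx$ satisfies the eigenequation for $\mathbb{A}$, with the \emph{same} $\lambda$.

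Finally, since $D$ is invertible, $x\neq 0$ if and only if $y=Dx\neq 0$, so genuine eigenvectors correspond to genuine eigenvectors (not merely to solutions of the scalar identities). This settles both directions simultaneously; alternatively the converse can be read off by applying the forward direction to the relation $\mathbb{A}=D^{m-1}\mathbb{B}D^{-1}$. I expect the only delicate point to be the first step: carefully tracking the multi-indices $\alpha_1,\ldots,\alpha_{m-1}$ appearing in Formula \ref{F-Shao} and verifying that the two diagonal factors act on disjoint sets of indices, so that their scalar contributions $d_i^{-(m-1)}$ and $d_{i_2}\cdots d_{i_m}$ simply multiply. Once this entry formula is established, the remainder is routine algebra.
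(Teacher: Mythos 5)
Your proof is correct; the paper itself gives no proof of this lemma (it is quoted from \cite{2013S}), and your argument --- collapsing Formula \ref{F-Shao} to the entry formula $b_{ii_2\ldots i_m}=d_i^{-(m-1)}d_{i_2}\cdots d_{i_m}a_{ii_2\ldots i_m}$ and then rewriting the componentwise eigenequation under $y=Dx$ --- is exactly the standard verification from Shao's original paper. Nothing is missing: each step is a reversible equivalence, and your closing observation that $x\neq 0$ if and only if $y=Dx\neq 0$ (by invertibility of $D$) correctly handles the nonzero-vector requirement in both directions.
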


\section{Main result}
\hskip.6cm In this section, we will obtain the sharp upper and lower bounds for the spectral radius of a nonnegative weakly  irreducible tensor.
Applying this result to an irreducible matrix, we will obtain the main result  of   \cite{2017Y}.

For a tensor $\mathbb{A}$, for any $i\in [n]$,  we  denote $N_{\mathbb{A}}(i)$ (or simply $N(i)$) by
$$N_{\mathbb{A}}(i)=\{i_2, \ldots, i_m | a_{ii_2\ldots i_m}\not=0\}.$$

\begin{prop}\label{prop21}
 Let $\mathbb{A}= (a_{i_1i_2\ldots i_m})_{1\le i_j\le n \hskip.2cm (j=1, \ldots, m)}$ be a  nonnegative weakly
irreducible  tensor with order $m$ dimension $n$,
 Then for any $i\in [n]$, there exist some $j, k\in [n]\backslash \{i\}$ such that $j\in N(i)$ and $i\in N(k).$
\end{prop}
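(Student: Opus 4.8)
The plan is to translate the two desired conclusions into statements about the representation matrix $G(\mathbb{A})$ and its associated digraph $D(G(\mathbb{A}))$, and then read them off from strong connectivity. First I would record the basic dictionary: by Definition \ref{defn14}, the $(i,j)$-entry of $G(\mathbb{A})$ is positive exactly when some nonzero entry $a_{ii_2\ldots i_m}$ has $j\in\{i_2,\ldots,i_m\}$, which is precisely the condition $j\in N(i)$. Hence $N(i)=\{\,j : (G(\mathbb{A}))_{ij}>0\,\}$ is the out-neighbourhood of $i$ in $D(G(\mathbb{A}))$, and ``$i\in N(k)$'' says that $(k,i)$ is an arc of $D(G(\mathbb{A}))$. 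I would also note at the outset that $n\ge 2$: weak irreducibility is only meaningful when a nonempty proper subset of $[n]$ exists, so $[n]\setminus\{i\}\neq\varnothing$ throughout.

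For the out-neighbour $j$, I would give two equivalent arguments and keep the shorter one. Via Proposition \ref{prop15}(iii), $D(G(\mathbb{A}))$ is strongly connected; choosing any $v\neq i$, a path from $i$ to $v$ has distinct vertices, so its initial arc is $(i,u_1)$ with $u_1\neq i$, giving $j=u_1\in N(i)\setminus\{i\}$. The more direct route is the contrapositive via Definition \ref{defn13} applied to the singleton $I=\{i\}$: the statement ``no $j\in[n]\setminus\{i\}$ lies in $N(i)$'' says exactly that $a_{ii_2\ldots i_m}=0$ whenever at least one of $i_2,\ldots,i_m$ differs from $i$, i.e.\ that $\mathbb{A}$ is $\{i\}$-weakly reducible, contradicting weak irreducibility.

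The in-neighbour $k$ is handled symmetrically. From strong connectivity, a path from some $v\neq i$ to $i$ ends with an arc $(w,i)$, $w\neq i$, so $k=w$ satisfies $i\in N(k)$. Equivalently, applying Definition \ref{defn13} to the complementary set $I=[n]\setminus\{i\}$ (a nonempty proper subset, since $n\ge 2$): ``no $k\neq i$ has $i\in N(k)$'' means $a_{ki_2\ldots i_m}=0$ for every $k\in I$ and every index tuple containing $i$, i.e.\ for every tuple with at least one entry outside $I$; this is precisely $I$-weak reducibility, again contradicting the hypothesis.

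There is no serious obstacle here; the content is entirely bookkeeping. The one point demanding care is the quantifier translation in Definition \ref{defn13}, namely matching ``at least one of $i_2,\ldots,i_m\notin I$'' to the negations ``$N(i)\subseteq\{i\}$'' and ``$i\notin N(k)$ for all $k\neq i$'' for the two chosen sets $I$; the only hidden hypothesis to flag is $n\ge 2$. I would present the proof through the specific subsets $I=\{i\}$ and $I=[n]\setminus\{i\}$, since this invokes only the definition of weak irreducibility and avoids appealing to the path characterization.
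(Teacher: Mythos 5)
Your proposal is correct and, in the form you say you would present it, is essentially the paper's own proof: a contradiction argument applying Definition \ref{defn13} to the two subsets $I=\{i\}$ and $I=[n]\setminus\{i\}$, with the same quantifier translation. The digraph route via Proposition \ref{prop15} and the explicit $n\ge 2$ caveat are harmless extras the paper omits but do not change the argument.
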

\begin{proof}
Let $I=\{i\}$.   For any $j\in [n]\backslash \{i\}$,  if  $j\not\in N(i)$, say,
 $a_{ii_2\ldots i_m}=0$ for $\forall i\in I$ and at least one of  $i_2, \ldots, i_m\not\in I$,
 then   $\mathbb{A}$ is weakly reducible, it is a contradiction.

 Similarly, let $I=[n]\backslash \{i\}$. For any $k\in I$,   if $i\not\in N(k)$, say,
 $a_{ki_2\ldots i_m}=0$ for $\forall k\in I$ and at least one of  $i_2, \ldots, i_m\not\in I$,
 then   $\mathbb{A}$ is weakly reducible, it is a contradiction.
\end{proof}

By Proposition \ref{prop21}, we obtain the following result easily.
\begin{prop}\label{prop22}
Let  $\mathbb{A}$  be a nonnegative weakly irreducibe tensor.  Then $r_i(\mathbb{A})>0$.
\end{prop}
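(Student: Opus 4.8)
The plan is to read off the positivity of each row sum directly from Proposition \ref{prop21}, with the nonnegativity hypothesis doing the only real work. First I would fix an arbitrary index $i\in[n]$ and invoke Proposition \ref{prop21}, which supplies some $j\in[n]\backslash\{i\}$ with $j\in N(i)$. Unwinding the definition of $N_{\mathbb{A}}(i)$, the membership $j\in N(i)$ certifies an actual tuple $(i_2,\ldots,i_m)$ for which $a_{ii_2\ldots i_m}\neq 0$ (indeed with $j$ appearing among the indices $i_2,\ldots,i_m$); in particular row $i$ contains at least one nonzero entry.

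Next I would bring in the nonnegativity of $\mathbb{A}$: since every entry is nonnegative, the nonzero entry just produced is in fact strictly positive. The row sum $r_i(\mathbb{A})=\sum_{i_2,\ldots,i_m=1}^{n}a_{ii_2\ldots i_m}$ is therefore a sum of nonnegative terms containing at least one strictly positive term, which forces $r_i(\mathbb{A})>0$. As $i$ was arbitrary, this gives $r_i(\mathbb{A})>0$ for every $i\in[n]$, which is the assertion.

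I do not expect any genuine obstacle here; the whole argument is the translation of the combinatorial conclusion of Proposition \ref{prop21} (existence of a nonzero entry in each row) into the arithmetic statement about the row sum, using only that a sum of nonnegative numbers with one positive summand is positive. The single point that requires care is reading the notation $N_{\mathbb{A}}(i)$ correctly, so that $j\in N(i)$ is interpreted as witnessing a bona fide nonzero entry $a_{ii_2\ldots i_m}$ of $\mathbb{A}$ rather than a merely formal membership.
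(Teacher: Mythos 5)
Your proof is correct and is exactly the argument the paper intends: the paper gives no explicit proof, remarking only that the proposition follows easily from Proposition \ref{prop21}, and your route (extract a nonzero entry $a_{ii_2\ldots i_m}$ in row $i$ from $j\in N(i)$, then use nonnegativity to conclude $r_i(\mathbb{A})>0$) is that easy deduction spelled out. No gaps.
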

\begin{them}\label{main result 1}
Let $\mathbb{A}= (a_{i_1i_2\ldots i_m})$ be a  nonnegative weakly
irreducible  tensor with order $m$ dimension $n$ and $a_{i\ldots i}=0$ for any $i\in [n]$.
Let  $N(i)=N_{\mathbb{A}}(i)$  defined as above, $R_i>0$ for any $i\in [n]$,
and $S_i=\sum\limits_{i_2, \ldots, i_m=1}^na_{ii_2\ldots i_m}R_{i_2}\ldots R_{i_m}$.
  Let $t_i\geq0$ and
  $\mathbb{B}=\mathbb{A}+\mathbb{M}$, where
$\mathbb{M}$ is a diagonal tensor with its diagonal element $m_{ii\ldots i }=t_{i}$.
 For any $1\leq i,j\leq n$, write $$F(i,j)=\frac{t_i+t_j+\sqrt{(t_i-t_j)^2+\frac{4S_iS_j}{(R_iR_j)^{m-1}}}}{2}.$$    Then
\begin{equation}\label{eq21}
\min\limits_{1\leq i,j\leq n}\{F(i,j), j\in N(i)\}\leq\rho(\mathbb{B})\leq \max\limits_{1\leq i,j\leq n}\{F(i,j), j\in N(i)\}.
\end{equation}
Moreover, one of the equalities in (\ref{eq21}) holds if and only if one of the two conditions holds:

\noindent {\rm (i) } $t_i+\frac{S_i}{R_i^{m-1}}=t_j+\frac{S_j}{R_j^{m-1}}$ for any $i,j\in[n]$.

\noindent  {\rm (ii) } There exist subsets $U$ and $W$ of $[n]$ such that

\noindent {\rm (1) } $[n]=U\cup W$ with $U\cap W=\phi$;

\noindent {\rm (2) }$a_{i_1i_2\ldots i_m}\neq0$  only when $i_1 \in  U, i_2,\ldots, i_m \in   W$ or
$i_1 \in W, i_2,\ldots, i_m \in  U$;

\noindent {\rm (3) } there exists $\ell>0$ such that
$\rho(\mathbb{B})=t_i+\frac{\ell^{m-1}S_{i}}{R_{i}^{m-1}}=t_j+\frac{S_j}{\ell^{m-1}R_j^{m-1}}$ for all $i\in U$  and all $j\in W.$
In fact, $\ell >1$ when the left equality holds and $\ell<1$ when the right equality holds.
\end{them}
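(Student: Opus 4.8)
The plan is to first pass to a diagonally similar tensor so that the quantities $S_i/R_i^{m-1}$ become honest row sums. Put $D=\mathrm{diag}(R_1,\dots,R_n)$ and $\mathbb{B}'=D^{-(m-1)}\mathbb{B}D$. By Definition \ref{defn19} and Lemma \ref{lem110}, $\rho(\mathbb{B}')=\rho(\mathbb{B})=:\rho$, and a direct computation with (\ref{F-Shao}) gives $b'_{ii_2\dots i_m}=R_i^{-(m-1)}b_{ii_2\dots i_m}R_{i_2}\cdots R_{i_m}$; in particular the diagonal is unchanged, so (using $a_{i\dots i}=0$) $b'_{i\dots i}=t_i$, and the $i$-th row sum of $\mathbb{B}'$ equals $t_i+c_i$ with $c_i:=S_i/R_i^{m-1}$. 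Since $\mathbb{M}$ only perturbs the diagonal, $G(\mathbb{B})$ and $G(\mathbb{A})$ have the same off-diagonal support, so $\mathbb{B}'$ is weakly irreducible and by Lemma \ref{P-F theorem} has a positive eigenvector $y>0$. Writing the $i$-th eigenequation as $(\rho-t_i)y_i^{m-1}=\sum_{(i_2,\dots,i_m)}b'_{ii_2\dots i_m}y_{i_2}\cdots y_{i_m}$ and noting the right side is strictly positive (Proposition \ref{prop21} gives $N(i)\neq\emptyset$), I get $\rho>t_i$ for every $i$. Finally I would record the purely algebraic fact that $F(i,j)$ is the larger root of $(\lambda-t_i)(\lambda-t_j)=c_ic_j$, so that for $\rho>\max\{t_i,t_j\}$ one has $\rho\le F(i,j)\iff(\rho-t_i)(\rho-t_j)\le c_ic_j$ (and the reverse); this converts the whole problem into exhibiting good pairs.

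For the upper bound, let $p$ realize $y_p=\max_k y_k$ and let $q$ maximize $y$ over $N(p)$; then $q\in N(p)$ and $q\neq p$ because $a_{p\dots p}=0$. The key observation is that every index occurring in a nonzero tuple of row $p$ lies in $N(p)$ and is therefore $\le y_q$, so the whole product is bounded by $y_q^{m-1}$ — this is sharper than bounding a single factor and is exactly what makes the $2\times2$ estimate close. Hence $(\rho-t_p)y_p^{m-1}\le c_p y_q^{m-1}$, while bounding every factor in row $q$ by the global maximum $y_p$ gives $(\rho-t_q)y_q^{m-1}\le c_q y_p^{m-1}$. Multiplying and cancelling $(y_py_q)^{m-1}>0$ yields $(\rho-t_p)(\rho-t_q)\le c_pc_q$, i.e. $\rho\le F(p,q)\le\max_{j\in N(i)}F(i,j)$. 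The lower bound is symmetric: take $p'$ with $y_{p'}=\min_k y_k$ and $q'$ minimizing $y$ over $N(p')$, bound the two products from below by $y_{q'}^{m-1}$ and $y_{p'}^{m-1}$, multiply to obtain $(\rho-t_{p'})(\rho-t_{q'})\ge c_{p'}c_{q'}$, hence $\rho\ge F(p',q')\ge\min_{j\in N(i)}F(i,j)$.

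For the equality discussion in (\ref{eq21}) the easy direction is a verification. If all $t_i+c_i$ equal a common $s$ (condition (i)), the discriminant in $F$ collapses to a perfect square and $F(i,j)\equiv s$, while Lemma \ref{lem18} forces $\rho=s$. If the bipartite data of (ii) hold, the vector equal to $1$ on $U$ and $\ell$ on $W$ is a positive eigenvector of $\mathbb{B}'$ with eigenvalue $\rho$ (so $\rho=\rho(\mathbb{B}')$ by Lemma \ref{P-F theorem}(1)), and substituting into $(\rho-t_i)(\rho-t_j)=c_ic_j$ shows $\rho=F(i,j)$ for every admissible pair. The substantive direction is the converse, and here the engine is that the estimate above is valid with $p$ replaced by \emph{any} global maximizer; thus if $\rho=\max F$ then $\rho=F(p,q)$ with both displayed inequalities tight for every such $p$. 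Tightness of the first forces all tuple-neighbors of a maximal vertex to share the single value $\max_{N(p)}y$; tightness of the second forces all tuple-neighbors of those vertices to attain the global maximum. Propagating this alternation along walks in the strongly connected digraph $D(G(\mathbb{A}))$ (Proposition \ref{prop15}) should show that $y$ takes at most two values: a constant eigenvector returns condition (i), while two distinct values $\alpha>\beta$ split $[n]$ into $U=\{y=\alpha\}$ and $W=\{y=\beta\}$ with every nonzero entry crossing between the classes, which is (ii), with $\ell=\beta/\alpha$ (so $\ell<1$ for the upper equality and, by the mirror argument for the minimum, $\ell>1$ for the lower).

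The hard part will be precisely this propagation: upgrading the two \emph{local} equalities at the extremal pair into the \emph{global} two-class structure. In particular one must rule out a third eigenvector value — which genuinely occurs in non-extremal configurations where $\rho$ lies strictly between $\min F$ and $\max F$, so equality at the maximizing pair alone is not enough — and must also carry out the bookkeeping that matches the sign of $\ell-1$ to the side of (\ref{eq21}) that is attained. My expectation is that the inequalities and the ``if'' part of the equality statement are routine once the bounding-by-the-largest-neighbor trick is isolated, and that essentially all the difficulty is concentrated in this connectivity-driven rigidity argument.
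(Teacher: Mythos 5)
Everything you actually carry out is correct and coincides with the paper's own argument: the diagonal scaling $D^{-(m-1)}\mathbb{B}D$, the observation $\rho(\mathbb{B})>t_i$, the max/second-max pair $(p,q)$ with row $p$ bounded by $x_q^{m-1}$ and row $q$ by the global maximum (and the mirrored minimum argument), and both sufficiency verifications — equal row sums plus Lemma \ref{lem18} for condition (i), and the explicit two-valued positive eigenvector plus Lemma \ref{P-F theorem}(1) for condition (ii) — are exactly the paper's steps. (One inessential slip: $q\neq p$ does not follow from $a_{p\ldots p}=0$, since $p$ may occur in a mixed nonzero tuple of its own row; but nothing in the argument uses $q\neq p$.)

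The genuine gap is precisely where you flag it: the necessity direction is only a sketch ending in ``should show that $y$ takes at most two values,'' and the missing idea is the quantitative step that pins down the second-level values at \emph{different} starting points. The paper's proof proceeds as follows: tightness at $(p_1,q_1)$ makes every vertex of $N(q_1)$ a global maximizer; picking $p_2\in N(q_1)$ and $q_2$ maximizing over $N(p_2)$, the inequality $\rho(\mathbb{B})\le F(p_2,q_2)$ together with $\rho(\mathbb{B})=\max\{F(i,j)\colon j\in N(i)\}\ge F(p_2,q_2)$ forces equality anew (your remark that the estimate holds at any global maximizer supplies this); but nothing yet says $x_{q_2}=x_{q_1}$. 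The paper's key move, absent from your proposal, is the \emph{cross-pair} comparison: from the exact identities $\rho(\mathbb{B})-t_{q_i}=\frac{S_{q_i}}{R_{q_i}^{m-1}}x_{q_i}^{-(m-1)}$ and $\rho(\mathbb{B})-t_{p_{i+1}}=\frac{S_{p_{i+1}}}{R_{p_{i+1}}^{m-1}}x_{q_{i+1}}^{m-1}$, together with the admissibility of the reversed pair ($p_{i+1}\in N(q_i)$, hence $\rho(\mathbb{B})\ge F(q_i,p_{i+1})$), one deduces $x_{q_{i+1}}\ge x_{q_i}$; strong connectedness of $D(G(\mathbb{B}))$ (Proposition \ref{prop15}) then routes such chains from any $q_i$ to any $q_j$ and back, forcing all these values equal, and a further path-pulling argument shows $U\cup W=[n]$, which is what rules out your feared third eigenvector value. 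Without this monotonicity mechanism the local equalities at one extremal pair do not globalize — as your own caveat about non-extremal configurations concedes — so the proposal as written establishes the bounds \eqref{eq21} and the ``if'' half of the equality case, but not the ``only if'' half; with that comparison inserted, the final case split $x_q=1$ (condition (i)) versus $x_q<1$ (condition (ii) with $\ell=x_q<1$, and $\ell>1$ from the mirrored argument) closes the proof exactly as you anticipated.
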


\begin{Proof}
Let $R=diag(R_1,R_2,\ldots,R_n)$. Then $R$ is an invertible diagonal matrix, thus $\mathbb{B}$ and $R^{-(m-1)}\mathbb{B}R$ have the same spectra by Lemma \ref{lem110}.
For any $i \in [n]$, by Definition \ref{defn11}, we have
\begin{eqnarray}\begin{split}\label{eq22}
(R^{-(m-1)}\mathbb{B}R)_{ii_2\ldots i_m}
&=\sum\limits_{j=1}^{n} (R^{-(m-1)})_{ij}(\mathbb{B}R)_{ji_2\ldots i_m}\\
&=\sum\limits_{j=1}^{n} (R^{-(m-1)})_{ij}\sum\limits_{j_2, \ldots, j_m=1}^n b_{jj_2\ldots j_m}R_{j_2i_2}\ldots R_{j_mi_m}\\
&=(R^{-(m-1)})_{ii}b_{ii_2\ldots i_m}R_{i_2}\ldots R_{i_m}\\
&=R_i^{-(m-1)}b_{ii_2\ldots i_m}R_{i_2}\ldots R_{i_m}\\
&=\left\{\begin{array}{cc}
            t_i, & i_2=\ldots=i_m=i; \\
            R_i^{-(m-1)}a_{ii_2\ldots i_m}R_{i_2}\ldots R_{i_m}, & otherwise.
          \end{array}\right.
\end{split}\end{eqnarray}

\noindent Thus $R^{-(m-1)}\mathbb{B}R$ are also nonnegative weakly irreducible by Proposition \ref{prop15}.
By Lemma \ref{P-F theorem}, we know that there exists a positive eigenvector corresponding to
$\rho(\mathbb{B}),$ denoted by $X=(x_1,\ldots,x_n)^T,$
say, $(R^{-(m-1)}\mathbb{B}R)X=\rho(\mathbb{B}) X^{[m-1]}.$ Combining (\ref{eq22}), for any $i \in [n]$ we have
\begin{align}
((R^{-(m-1)}\mathbb{B}R)X)_i  &  =\sum\limits_{i_2, \ldots, i_m=1}^n(R^{-(m-1)}\mathbb{B}R)_{ii_2\ldots i_m}x_{i_2}\ldots x_{i_m} \nonumber \\
& =t_ix_i^{m-1}+\sum\limits_{i_{2},i_{3},\ldots, i_{m}=1}^{n}
R_i^{-(m-1)}R_{i_2}\ldots R_{i_m} a_{ii_2\ldots i_m}x_{i_2}\ldots
x_{i_m}\nonumber \\
&=\rho(\mathbb{B}) x_i^{m-1},\nonumber
\end{align}
and then \begin{eqnarray}\label{eq23}
(\rho(\mathbb{B})-t_i)x_i^{m-1}
=R_i^{-(m-1)}\sum\limits_{i_{2},i_{3},\ldots, i_{m}=1}^{n}R_{i_2}\ldots R_{i_m} a_{ii_2\ldots i_m}x_{i_2}\ldots x_{i_m}.
\end{eqnarray}

By (\ref{eq23}), Propositions \ref{prop21} and \ref{prop22},  we have $\rho(\mathbb{B})>t_i$ for any $i$.

\vskip.2cm
First we  prove the upper bounds for  $\rho(\mathbb{B})$.
Without loss of generality, we suppose $p,q\in [n]$ such that
$$x_p=\max\{x_i\,|\,i\in [n]\}=1, \quad  x_q=\max\{x_i\,|\,i\in N(p)\}.$$
By (\ref{eq23}), we have
\begin{eqnarray}\label{eq24}
\begin{split}
(\rho(\mathbb{B})-t_p)x_p^{m-1}&=R_p^{-(m-1)}\sum\limits_{i_{2},i_{3},\ldots,
i_{m}=1}^{n}R_{i_2}\ldots R_{i_m} a_{pi_2\ldots i_m}x_{i_2}\ldots x_{i_m}\\
&\leq R_p^{-(m-1)}\sum\limits_{i_{2},i_{3},\ldots, i_{m}=1}^{n}R_{i_2}\ldots
R_{i_m} a_{pi_2\ldots i_m}x_q^{m-1}\\
&= R_p^{-(m-1)}S_px_q^{m-1}
\end{split}
\end{eqnarray}
with equality if and only if Clause (a) holds, where  (a). $x_k=x_q$ for all $k\in N(p).$

Similarly, we have
\begin{eqnarray}\label{eq25}
\begin{aligned}
(\rho(\mathbb{B})-t_q)x_q^{m-1}&
=R_q^{-(m-1)}\sum\limits_{i_{2},i_{3},\ldots,i_{m}=1}^{n}R_{i_2}\ldots R_{i_m} a_{qi_2\ldots i_m}x_{i_2}\ldots x_{i_m}\\
&\leq R_q^{-(m-1)}\sum\limits_{i_{2},i_{3},\ldots, i_{m}=1}^{n}R_{i_2}\ldots
R_{i_m} a_{qi_2\ldots i_m}\\
&=R_q^{-(m-1)}S_q
\end{aligned}
\end{eqnarray}
with equality if and only if Clause (b) holds, where (b). $x_k=1$ for all  $k\in N(q).$

Therefore, by  (\ref{eq24})  and (\ref{eq25}), we have
$$(\rho(\mathbb{B})-t_p)(\rho(\mathbb{B})-t_q)\leq \frac{S_pS_q}{(R_pR_q)^{m-1}},$$
thus
\begin{equation}\label{eq26}
\rho(\mathbb{B})\leq\frac{t_p+t_q+\sqrt{(t_p-t_q)^2+\frac{4S_pS_q}{(R_pR_q)^{m-1}}}}{2}=F(p,q),
\end{equation}
and by $q\in N(p),$ we have
\begin{equation}\label{upper bound}
\rho(\mathbb{B})\leq \max\limits_{1\leq i,j\leq n}\{F(i,j), j\in N(i)\}.
\end{equation}

\vskip.2cm

Now we  prove the lower bounds for  $\rho(\mathbb{B})$.
Without loss of generality, we suppose $p,q\in [n]$ such that $$x_p=\min\{x_i\,|\,i\in [n]\}=1, \quad  x_q=\min\{x_i\,|\,i\in N(p)\}.$$ By (\ref{eq23}) we have
\begin{eqnarray}\label{eq28}
\begin{aligned}
\rho(\mathbb{B})-t_p&=(\rho(\mathbb{B})-t_p)x_p^{m-1}\\
&=R_p^{-(m-1)}\sum\limits_{i_{2},i_{3},\ldots, i_{m}=1}^{n}R_{i_2}\ldots R_{i_m}
a_{pi_2\ldots i_m}x_{i_2}\ldots x_{i_m}\\
&\geq (\frac{x_q}{R_p})^{m-1}\sum\limits_{i_{2},i_{3},\ldots,
i_{m}=1}^{n}R_{i_2}\ldots R_{i_m} a_{pi_2\ldots i_m}\\
&= (\frac{x_q}{R_p})^{m-1}S_p
\end{aligned}
\end{eqnarray}
with equality if and only if  $x_k=x_q$ for all  $k\in N(p).$

Similarly, we have
\begin{eqnarray}\label{eq29}
\begin{aligned}
(\rho(\mathbb{B})-t_q)x_q^{m-1}&=R_q^{-(m-1)}\sum\limits_{i_{2},i_{3},\ldots,
i_{m}=1}^{n}R_{i_2}\ldots R_{i_m} a_{qi_2\ldots i_m}x_{i_2}\ldots x_{i_m}\\
&\geq R_q^{-(m-1)}\sum\limits_{i_{2},i_{3},\ldots, i_{m}=1}^{n}R_{i_2}\ldots
R_{i_m} a_{qi_2\ldots i_m}\\
&= \frac{S_q}{R_q^{m-1}}
\end{aligned}
\end{eqnarray}
with equality if and only if $x_k=1$ for all $k\in N(q).$

By (\ref{eq28}) and (\ref{eq29}), we have
 $$(\rho(\mathbb{B})-t_p)(\rho(\mathbb{B})-t_q)\geq \frac{S_pS_q}{(R_pR_q)^{m-1}},$$
 thus
$$\rho(\mathbb{B})\geq\frac{t_p+t_q+\sqrt{(t_p-t_q)^2+\frac{4S_pS_q}{(R_pR_q)^{m-1}}}}{2}=F(p,q),$$\label{eq12}
and by $q\in N(p),$ we have
\begin{equation}\label{lower bound}
\rho(\mathbb{B})\geq\min\limits_{1\leq i,j\leq n}\{F(i,j), j\in N(i)\}.
\end{equation}

By (\ref{upper bound}) and (\ref{lower bound}), we complete the proof of (\ref{eq21}).

\vskip.2cm
Now we show the right equality in (\ref{eq21}) holds if and only if (i) or (ii) holds.
The proof of the left equality in (\ref{eq21}) is similar, we omit it.

Firstly, we complete the proof of the sufficiency part by the following two cases.

\noindent {\bf Case 1. } Condition (i) holds.

If for any $i,j\in[n]$, $t_i+\frac{S_i}{R_i^{m-1}}=t_j+\frac{S_j}{R_j^{m-1}}$,
then $t_i-t_j=\frac{S_j}{R_j^{m-1}}-\frac{S_i}{R_i^{m-1}},$
and
$$F(i,j)=\frac{t_i+t_j+\sqrt{(t_i-t_j)^2+\frac{4S_iS_j}{(R_iR_j)^{m-1}}}}{2}=t_i+\frac{S_i}{R_i^{m-1}}.$$
Thus $\max\limits_{1\leq i,j\leq n}\{F(i,j),j\in N(i)\}=t_i+\frac{S_i}{R_i^{m-1}}.$

On the other hand, by (\ref{eq22}), $R^{-(m-1)}\mathbb{B}R$ have the same row sum
$t_i+\frac{S_i}{R_i^{m-1}},$  thus   by Lemma \ref{lem18} and $R^{-(m-1)}\mathbb{B}R$  is  nonnegative weakly
irreducible, we have

$$\rho(\mathbb{B})=\rho(R^{-(m-1)}\mathbb{B}R)=t_i+\frac{S_i}{R_i^{m-1}}=\max\limits_{1\leq
i,j\leq n}\{F(i,j),j\in N(i)\}.$$

 \noindent{\bf Case 2.} Condition (ii) holds.

There exist nonempty proper subsets $U$ and $W$ of $[n]$ such that
$a_{i_1i_2\ldots i_m}\neq0$ only when $i_1\in U,
i_2,\ldots,i_m\in  W$  or
$i_1\in  W, i_2,\ldots,i_m\in U.$
Let $\alpha=t_i+(\frac{\ell}{R_i})^{m-1}S_i=t_{j}+\frac{S_{j}}{(\ell R_{j})^{m-1}}$
for all $i\in U$ and all $j\in W$.
Construct a positive vector $Y=(y_1,y_2,\ldots, y_n)^T$ with $y_i=R_i$ when $i\in U$ and $y_i= \ell R_i$ when
                                                  $i\in W.$  We will check $\mathbb{B}Y=\alpha Y^{[m-1]}.$
In fact when $i\in U$ we have
\begin{eqnarray*}
(\mathbb{B}Y)_i &=&\sum\limits_{i_{2},i_{3},\ldots, i_{m}=1}^{n}b_{ii_2\ldots i_m}y_{i_2}\ldots y_{i_m} \\
&=&b_{ii\ldots i}R_i^{m-1}+\sum\limits_{i_{2},i_{3},\ldots, i_{m}\in W}a_{ii_2\ldots i_m}\ell R_{i_2}\ldots \ell R_{i_m}\\
&=&t_iR_i^{m-1}+\ell^{m-1}S_i\\
&=&\Big[t_i+(\frac{\ell}{R_i})^{m-1}S_i \Big]R_i^{m-1}\\
&=&\alpha y_i^{m-1},
\end{eqnarray*}
and for any  $j\in W$,
\begin{eqnarray*}
(\mathbb{B}Y)_j
&=&\sum\limits_{i_{2},i_{3},\ldots, i_{m}=1}^{n}b_{ji_2\ldots i_m}y_{i_2}\ldots y_{i_m}\\
&=&b_{jj\ldots j}(\ell R_{j})^{m-1}+\sum\limits_{i_{2},i_{3},\ldots, i_{m}\in U} a_{j i_2\ldots i_m}R_{i_2}\ldots R_{i_m}\\
&=&t_{j}(\ell R_{j})^{m-1}+S_{j}.\\
&=&\Big[t_{j}+\frac{S_{j}}{(\ell R_{j})^{m-1}}\Big]( \ell R_j)^{m-1}\\
&=&\alpha y_j^{m-1}.
\end{eqnarray*}

 Then $\alpha$ is an eigenvalue of $B$ with eigenvector $Y$,  and thus $\rho(\mathbb{B})=\alpha $ by  $Y$ is a positive vector and Lemma \ref{P-F theorem}.

 On the other hand, if $j\in N(i),$ then

$$t_i-t_j=\left\{\begin{array}{cc}
                   \frac{S_j}{(\ell R_j)^{m-1}}-(\frac{\ell}{R_i})^{m-1}S_i, &\mbox{ if } i\in U, j\in  W; \\
                  (\frac{\ell}{R_j})^{m-1}S_j-\frac{S_i}{(\ell R_i)^{m-1}}, &\mbox{ if }  i\in  W,j\in U.
                 \end{array}\right.$$

\noindent and

$F(i,j)=\frac{t_i+t_j+\sqrt{(t_i-t_j)^2+\frac{4S_iS_j}{(R_iR_j)^{m-1}}}}{2}$

\hskip1.2cm $=\left\{\begin{array}{ll}
\frac{t_i+t_j+\frac{S_j}{(\ell R_j)^{m-1}}+(\frac{\ell}{R_i})^{m-1}S_i}{2}, & \mbox{ if } i\in U, j\in  W; \\
 \frac{ t_i+t_j+\frac{S_i}{(\ell R_i)^{m-1}}+(\frac{\ell}{R_j})^{m-1}S_j}{2}, &\mbox{ if }  i\in  W,j\in U.
 \end{array}\right.$

\hskip1.2cm  $=\alpha,$
\vskip.2cm

\noindent thus we have $\max\limits_{1\leq i,j\leq n}\{F(i,j),j\in N(i)\}=\alpha.$

Combining the above arguments, we have $\rho(\mathbb{B})=\max\limits_{1\leq i,j\leq n}\{F(i,j),j\in N(i)\}.$

\vskip.2cm
Now we focus on the necessity part.

Let $p,q\in [n]$ which defined in the proof of the upper bounds in (2.1), i.e.,
$x_{p}=\max \{x_{i}\, |\, i\in [n]\}=1, x_{q}=\max \{x_{i}\, |\, i\in N(p)\}$ and $\rho(\mathbb{B})\leq F(p,q)$.
Now $\rho(\mathbb{B})=\max\limits_{1\leq i,j\leq n}\{F(i,j),j\in N(i)\}\geq F(p,q)$, so $\rho(\mathbb{B})= F(p,q)$, which means the equality in (\ref{eq26}) occurs. Thus the equalities in (\ref{eq24}) and (\ref{eq25}) both hold.  If we write $T(i)=R_i^{-(m-1)}S_i$  for any $i\in[n]$, then we have
           $$\rho(\mathbb{B})-t_p=T(p)x_q^{m-1}, \quad  \rho(\mathbb{B})-t_q=T(q)x_q^{-(m-1)}.$$
Write $p_{1}=p$ and $q_{1}=q$. By Clause (a) we have $x_i=x_q$ for any $i \in N(p_{1})$, and by Clause (b) we have $x_{i}=1$ for any $i \in N(q_{1})$.
Pick $p_{2}$ in $N(q_{1})$ and let $q_{2} \in N(p_{2}) $ such that $x_{q_{2}}=\max \{x_{i}\, |\, i\in N(p_{2})\}$.
Using the similar arguments for the pair $(p_{2}, q_{2})$ as that of the above pair $(p,q)$,
we have $$\rho(\mathbb{B})-t_{p_{2}}=T(p_{2})x_{q_{2}}^{m-1}, \quad          \rho(\mathbb{B})-t_{q_{2}}=T(q_{2})x_{q_{2}}^{-(m-1)}.$$
And we have $x_i=x_{q_{2}}$ for any $i \in N(p_{2})$ and $x_{i}=1$ for any $i \in N(q_{2})$.

Pick $p_{3}$ in $N(q_{2})$ and let $q_{3} \in N(p_{3}) $ and repeat the above arguments,
then we may obtain a sequence $p_{1}, q_{1},p_{2}, q_{2},p_{3}, q_{3},\ldots$,
where  $$\rho(\mathbb{B})-t_{p_{i}}=T(p_{i})x_{q_{i}}^{m-1}, \quad \rho(\mathbb{B})-t_{q_{i}}=T(q_{i})x_{q_{i}}^{-(m-1)},$$
and  $x_{p_{i}}=1$, $x_{q_{i}}=\max \{x_{i}\, |\, i\in N(p_{i})\}$.

Now we will prove all $x_{q_{i}}$'s are equal. First we will prove  $x_{q_{i+1}}\geq x_{q_{i}}$. Combining
 $$\rho(\mathbb{B})-t_{q_{i}}=T(q_{i})x_{q_{i}}^{-(m-1)}, \quad \rho(\mathbb{B})-t_{p_{i+1}}=T(p_{i+1})x_{q_{i+1}}^{m-1},$$ we have
$$\rho(\mathbb{B})=\frac{t_{q_{i}}+t_{p_{i+1}}+
\sqrt{(t_{q_{i}}-t_{p_{i+1}})^2+(\frac{x_{q_{i+1}}}{x_{q_{i}}})^{m-1}4T(q_{i})T(p_{i+1})}}{2}.$$

 On the other hand, we have
$$\rho(\mathbb{B})\geq  F(q_{i},p_{i+1})=\frac{t_{q_{i}}+t_{p_{i+1}}+\sqrt{(t_{q_{i}}-t_{p_{i+1}})^2+4T(q_{i})T(p_{i+1})}}{2}.$$
So we have $x_{q_{i+1}}\geq  x_{q_{i}}$ for any $i$ by the above two inequalities.

Consider the associated directed graph of $G(\mathbb{B})$, $D(G(\mathbb{B}))$, by Proposition \ref{prop15},
$D(G(\mathbb{B}))$ is strongly connected since $\mathbb{A}$ thus $\mathbb{B}$ is weakly irreducible.
So for any $q_{i}$ and $q_{j}$,  there is a path from $q_{i}$ to $q_{j}$ in $D(G(\mathbb{B}))$,
so we have  $x_{q_{j}}\geq x_{q_{i}}$. On the other hand, there is also a path from $q_{j}$ to $q_{i}$,
so $x_{q_{i}}\geq x_{q_{j}}$. Thus  we have showed $x_{q_{j}}= x_{q_{i}}$.

Now we define the set  $U$ which contains $p_{i}^{\prime s}$, where $p_{i}=p_{1}$ or $p_{i}$ is in some $N(q_{j})$.
Define the set  $W$ which contains $q_{i}^{\prime s}$, where  $q_{i}$ is in some $N(p_{j})$.
We have proved that $x_{p_{i}}=1$ for any $p_{i}$ in $U$, and $x_{q_{i}}= x_{q}$ for any $q_{i}$ in $W$.

In the following, we will prove that $U \cup W=[n]$. Suppose to the contrary that some $k \not \in U \cup W.$
There is a directed path from $p$ to $k$ in the strongly connected directed graph $D(G(\mathbb{B}))$, say $pk_{1}\ldots k_{s-1}k_{s}k$.
It is obvious that $k \not \in U $ implies that $k_{s}\not \in W$, while $k \not \in W $ implies that $k_{s}\not \in U$ either.
Hence $k_{s}\not \in U \cup W.$ By using this arguments we conclude that $k_{s-1}\not \in U \cup W.$
And then $p\not \in U \cup W,$  which is a contradiction.
Now we distinguish two cases to finish the proof.

\noindent {\bf Case 1: }$x_q=1$.

In this case,  we will show Condition (i) holds, i.e., $t_i+\frac{S_i}{R_i^{m-1}}=t_j+\frac{S_j}{R_j^{m-1}}$ holds for all $i,j\in[n].$  If $x_q=1$, then $X=(1,1,\ldots,1)^T.$  Therefore, by (\ref{eq23}) we have
$\rho(\mathbb{B})=t_i+\frac{S_i}{R_i^{m-1}}$ holds for any $i\in[n].$

\noindent {\bf Case 2: }$x_q<1$.

In this case, we will show Condition (ii) holds.
Take $\ell=x_q$, then $0<\ell<1$, and then for the eigenvector $X$ we have  $x_i=1$ if $i\in U$ and  $x_i=\ell$ when $i\in W$.
From the above arguments,  we know  $U\cap W=\phi$ and
$a_{i_1i_2\ldots i_m}\neq 0,$ if $i_1\in U$, then $i_2,\ldots,i_m\in W$, or $i_1\in W$ and then $i_2\ldots i_m\in U$.

By using (\ref{eq23}) for any $i\in U$ ($x_i=1$), then we have
\begin{eqnarray*}
\rho(\mathbb{B})&=&
t_ix_i^{m-1}+R_{i}^{-(m-1)} \sum\limits_{i_{2},i_{3},\ldots, i_{m}=1}^{n} a_{ii_2\ldots
i_m}x_{i_2}\ldots x_{i_m}R_{i_2}\ldots R_{i_m}\\
&=&t_i+R_{i}^{-(m-1)}\ell^{m-1}S_{i},
\end{eqnarray*}
For any $j\in W$ ($x_j=\ell$) we have
\begin{eqnarray*}
\rho(\mathbb{B})x_j^{m-1}&=&
t_jx_j^{m-1}+R_{j}^{-(m-1)}\sum \limits_{i_{2},i_{3},\ldots, i_{m}=1}^{n} a_{ji_2\ldots
i_m}x_{i_2}\ldots x_{i_m}R_{i_2}\ldots R_{i_{m}}\\
&=&t_jx_j^{m-1}+R_{j}^{-(m-1)}S_j,
\end{eqnarray*}
and then $\rho(\mathbb{B})=t_j+R_{j}^{-(m-1)}S_j \ell^{-(m-1)}.$
Hence for all $i\in U$  and all $j\in W$ we have
$$\rho(\mathbb{B})=t_i+\frac{\ell^{m-1}S_{i}}{R_{i}^{m-1}}=t_j+\frac{S_j}{\ell^{m-1}R_j^{m-1}}.$$
We finish the proof.
\end{Proof}

Particularly, if we  define $R=diag(r_1,r_2,\ldots, r_n)$ where $r_i=r_i(\mathbb{A})$, and
$s_i=\sum\limits_{i_2, \ldots, i_m=1}^na_{ii_2\ldots i_m}r_{i_2}\ldots r_{i_m}$ in Theorem \ref{main result 1}, we may obtain
the following result.

\begin{cor}\label{cor24}
Let $\mathbb{A}= (a_{i_1i_2\ldots i_m})$ be a  nonnegative weakly
irreducible  tensor with order $m$ dimension $n$ and $a_{i\ldots i}=0$ for any $i\in [n]$.
Let $r_i=r_i(\mathbb{A})$ and $N(i)=N_{\mathbb{A}}(i)$  defined as above,
and $s_i=\sum\limits_{i_2, \ldots, i_m=1}^na_{ii_2\ldots i_m}r_{i_2}\ldots r_{i_m}$.
  Let $t_i\geq0$ and $\mathbb{B}=\mathbb{A}+\mathbb{M}$, where
$\mathbb{M}$ is a diagonal tensor with its diagonal element $m_{ii\ldots i }=t_{i}$.
 For any $1\leq i,j\leq n$, write $$f(i,j)=\frac{t_i+t_j+\sqrt{(t_i-t_j)^2+\frac{4s_is_j}{(r_ir_j)^{m-1}}}}{2}.$$    Then
\begin{equation}\label{eq213}
\min\limits_{1\leq i,j\leq n}\{f(i,j), j\in N(i)\}\leq\rho(\mathbb{B})\leq \max\limits_{1\leq i,j\leq n}\{f(i,j), j\in N(i)\}.
\end{equation}
Moreover, one of the equalities in (\ref{eq213}) holds if and only if one of the two conditions holds:

\noindent {\rm (i) } $t_i+\frac{s_i}{r_i^{m-1}}=t_j+\frac{s_j}{r_j^{m-1}}$ for any $i,j\in[n]$.

\noindent  {\rm (ii) } There exist nonempty proper subsets $U$ and $W$ of $[n]$ such that

\noindent {\rm (1) } $[n]=U\cup W$ with $U\cap W=\phi$;

\noindent {\rm (2) }$a_{i_1i_2\ldots i_m}\neq0$  only when $i_1 \in  U, i_2,\ldots, i_m \in   W$ or
$i_1 \in W, i_2,\ldots, i_m \in  U$;

\noindent {\rm (3) } there exists $\ell>0$ such that
$\rho(\mathbb{B})=t_i+\frac{\ell^{m-1}s_{i}}{r_{i}^{m-1}}=t_j+\frac{s_j}{\ell^{m-1}r_j^{m-1}}$ for all $i\in U$  and all $j\in W.$
In fact, $\ell >1$ when the left equality holds and $\ell<1$ when the right equality holds.
\end{cor}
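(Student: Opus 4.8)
The plan is to recognize Corollary \ref{cor24} as nothing more than the specialization of Theorem \ref{main result 1} obtained by choosing the free parameters $R_i$ to be the row sums $r_i = r_i(\mathbb{A})$. Under this substitution the quantity $S_i = \sum a_{ii_2\ldots i_m} R_{i_2}\ldots R_{i_m}$ becomes exactly $s_i$, and the function $F(i,j)$ becomes exactly $f(i,j)$; consequently the inequality chain (\ref{eq21}) turns into (\ref{eq213}), and the two equality conditions transcribe verbatim with every $S$ and $R$ replaced by $s$ and $r$. Thus the entire content of the corollary is already subsumed by the theorem, and the only real task is to confirm that this particular choice of the $R_i$ is admissible.

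The single hypothesis of Theorem \ref{main result 1} that must be checked for this choice is $R_i > 0$ for all $i$. First I would invoke Proposition \ref{prop22}: since $\mathbb{A}$ is nonnegative and weakly irreducible, $r_i(\mathbb{A}) > 0$ for every $i \in [n]$. Hence $R_i = r_i > 0$, the hypothesis is met, and Theorem \ref{main result 1} applies directly to deliver both the bounds (\ref{eq213}) and the characterization of the equality cases, with $s_i$ and $f$ in place of $S_i$ and $F$.

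The only point that is not a pure transcription is that the corollary phrases condition (ii) with $U$ and $W$ required to be \emph{nonempty proper} subsets, whereas the theorem merely asserts the existence of subsets $U, W$ partitioning $[n]$. I expect this to be the one spot needing an extra sentence: I would argue that neither $U$ nor $W$ can be empty. Indeed, were $W = \emptyset$, then clause (2) of condition (ii), which permits $a_{i_1i_2\ldots i_m}\neq 0$ only when $i_2,\ldots,i_m \in W$ or $i_1 \in W$, would force $a_{i_1i_2\ldots i_m} = 0$ for all index tuples, whence $r_i(\mathbb{A}) = 0$ for every $i$, contradicting Proposition \ref{prop22}; the case $U = \emptyset$ is symmetric. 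Since the two sets partition $[n]$ disjointly and both are nonempty, each is automatically proper. This nonemptiness check is the extent of the work beyond citing the theorem; it is a triviality rather than a genuine obstacle, the substantive effort having all been expended in the proof of Theorem \ref{main result 1}.
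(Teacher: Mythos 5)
Your proposal coincides with the paper's own derivation: the paper obtains Corollary \ref{cor24} precisely by specializing Theorem \ref{main result 1} with $R_i = r_i(\mathbb{A})$ (so that $S_i$ becomes $s_i$ and $F$ becomes $f$), the required positivity $r_i > 0$ being exactly Proposition \ref{prop22}, just as you argue. Your closing observation that $U$ and $W$ are forced to be nonempty proper subsets (the theorem says only ``subsets,'' the corollary says ``nonempty proper'') is correct and is a small point the paper passes over silently, so your write-up is, if anything, slightly more complete.
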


Let $m=2$, we have the following results on the matrix case by Corollary \ref{cor24}.
\begin{cor}\label{cor25}{\rm (\cite{2017Y},Theorem 2.2)}
Let $A=(a_{ij})$ be an $n\times n$  nonnegative  irreducible matrix with $a_{ii}=0$ for  $i=1,2,\ldots,n$, and the row sum $r_1,r_2,\ldots,r_n$.
Let $B=A+M$, where $M=diag(t_1,t_2,\ldots,t_n)$ with $t_i\geq0$ for any $i\in\{1,2,\ldots,n\}$, $s_i=\sum\limits_{j=1}^n{a_{ij}r_j}$,
$\rho(B)$ be the spectral radius of $B$.
Let $f(i,j)=\frac{t_i+t_j+\sqrt{(t_i-t_j)^2+\frac{4s_is_j}{r_ir_j}}}{2}$ for any $1\leq i,j\leq n$.
Then
\begin{equation}\label{eq214}
\min\limits_{1\leq i,j\leq n}\{f(i,j), a_{ij}\not=0\}\leq \rho(B)
\leq \max\limits_{1\leq i,j\leq n}\{f(i,j), a_{ij}\not=0\}.
\end{equation}
Moreover, one of the equalities  in (\ref{eq214}) holds if and only if one of the two conditions holds:

\noindent {\rm (i) } $t_i+\frac{s_i}{r_i}=t_j+\frac{s_j}{r_j}$ for any $i,j\in\{1,2,\ldots,n\}$;

\noindent  {\rm (ii) } There exist subsets $U$ and $W$ of $[n]$ such that

\noindent {\rm (1) } $[n]=U\cup W$ with $U\cap W=\phi$;

\noindent {\rm (2) }$a_{ij}\neq0$  only when $i \in  U, j\in   W$ or $i\in W, j\in  U$;

\noindent {\rm (3) } there exists $\ell>0$ such that
$\rho(\mathbb{B})=t_i+\frac{\ell s_{i}}{r_{i}}=t_j+\frac{s_j}{\ell r_j}$ for all $i\in U$  and all $j\in W.$
In fact, $\ell >1$ when the left equality holds and $\ell<1$ when the right equality holds.
\end{cor}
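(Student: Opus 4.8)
The plan is to obtain Corollary \ref{cor25} as the case $m=2$ of Corollary \ref{cor24}, after checking that the classical matrix hypotheses translate correctly into the tensor language used there. First I would observe that an $n\times n$ matrix $A=(a_{ij})$ is precisely an order $m=2$, dimension $n$ tensor, and that under this identification the tensor eigenvalue equation $\mathbb{A}X^{m-1}=\lambda X^{[m-1]}$ collapses to the ordinary $AX=\lambda X$: indeed $X^{[m-1]}=X^{[1]}=X$, while $(\mathbb{A}X)_i=\sum_{i_2}a_{ii_2}x_{i_2}=(AX)_i$. Consequently the tensor spectral radius $\rho(\mathbb{A})$ coincides with the usual Perron spectral radius of $A$, and likewise $\rho(\mathbb{B})=\rho(B)$ for $B=A+M$.

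Next I would verify the irreducibility hypothesis. When $m=2$ the $(i,j)$ entry of the representation matrix $G(\mathbb{A})$ from Definition \ref{defn14} is the sum of the $a_{ii_2}$ over all indices with $i_2=j$, which is simply $a_{ij}$; hence $G(\mathbb{A})=A$. By Proposition \ref{prop15}, $\mathbb{A}$ is weakly irreducible if and only if $G(\mathbb{A})=A$ is an irreducible matrix. Thus the assumption that $A$ is nonnegative and irreducible with $a_{ii}=0$ is exactly the hypothesis of Corollary \ref{cor24} applied to the order-$2$ tensor $A$.

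Then I would specialize every quantity of Corollary \ref{cor24} to $m=2$ and match it against Corollary \ref{cor25}. Here $N(i)=\{i_2\mid a_{ii_2}\neq0\}=\{j\mid a_{ij}\neq0\}$, so the index restriction ``$j\in N(i)$'' becomes ``$a_{ij}\neq0$''; the row sum is $r_i=\sum_{i_2}a_{ii_2}=\sum_{j}a_{ij}$; the auxiliary quantity is $s_i=\sum_{i_2}a_{ii_2}r_{i_2}=\sum_{j}a_{ij}r_j$; and since $m-1=1$ we have $(r_ir_j)^{m-1}=r_ir_j$, $r_i^{m-1}=r_i$, and $\ell^{m-1}=\ell$. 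Substituting these into the function $f$ of Corollary \ref{cor24} yields exactly $f(i,j)=\bigl(t_i+t_j+\sqrt{(t_i-t_j)^2+4s_is_j/(r_ir_j)}\bigr)/2$ and turns the double inequality (\ref{eq213}) into (\ref{eq214}). The same substitutions turn equality condition (i) into $t_i+s_i/r_i=t_j+s_j/r_j$ and condition (ii)(3) into $\rho(B)=t_i+\ell s_i/r_i=t_j+s_j/(\ell r_j)$, precisely as stated.

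Since each step is a direct translation, there is no genuine obstacle here; the only points that demand care — and that I would state explicitly — are the two consistency facts above: that for $m=2$ the tensor eigenvalue and spectral-radius notions reduce to the classical matrix ones, and that $G(\mathbb{A})=A$, so weak irreducibility of the tensor is exactly irreducibility of the matrix. With these identifications in place, Corollary \ref{cor25} is literally Corollary \ref{cor24} read at $m=2$.
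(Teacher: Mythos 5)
Your proposal is correct and matches the paper exactly: the paper derives Corollary \ref{cor25} by simply setting $m=2$ in Corollary \ref{cor24}, which is precisely your route. Your explicit verifications --- that for $m=2$ the tensor eigenvalue equation reduces to $AX=\lambda X$, that $G(\mathbb{A})=A$ so weak irreducibility coincides with matrix irreducibility, and that $N(i)$, $r_i$, $s_i$, $f(i,j)$ and the equality conditions specialize as claimed --- are exactly the routine checks the paper leaves implicit.
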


\section{Applications to a $k$-uniform hypergraph}
\hskip.6cm It is well known that a hypergraph is a natural generalization of an ordinary graph {\rm (\cite{1973B})}.
A hypergraph $\mathcal{H}=(V(\mathcal{H}), E(\mathcal{H}))$ on $n$ vertices is a set of vertices, say, $V(\mathcal{H})=\{1,2,\ldots,n\}$
and a set of edges, say, $E(\mathcal{H})=\{e_1,e_2,\ldots,e_m\},$ where $e_i=\{i_1,i_2\ldots,i_l\}, i_j\in [n], j=1,2,\ldots,l.$
Let $k\geq 2$, if $\mid e_i\mid=k$ for any $i=1,2,\ldots ,m$, then $\mathcal{H}$ is called a $k$-uniform hypergraph.
Especially, if $k=2$, then $\mathcal{H}$ is an ordinary graph.
The degree $d_i$ of vertex $i$ is defined as $d_i=|\{e_j:i\in e_j\in E(\mathcal{H})\}|.$
If $d_i=d$ for any vertex $i$ of a hypergraph $\mathcal{H}$, then $\mathcal{H}$ is called $d$-regular.
A {\it walk} $W$ of length $\ell$ in $\mathcal{H}$ is a sequence of alternate vertices and edges: $v_0,e_1,v_1,e_2,\cdots,e_\ell,v_\ell,$ where $\{v_i,v_{i+1}\}\subseteq e_{i+1}$ for $i=0,1,\cdots,\ell-1$. The hypergraph $\mathcal{H}$ is said to be connected if every two vertices are connected by a walk.

\begin{defn}{\rm (\cite{Cooper2012} \cite{2014Qi})}
Let $\mathcal{H}=(V(\mathcal{H}),E(\mathcal{H}))$ be a $k$-uniform hypergraph on $n$ vertices.
The adjacency tensor of $\mathcal{H}$ is defined as the order $k$ dimension $n$ tensor $\mathbb{A}(\mathcal{H})$,
 whose $(i_1i_2\ldots i_k)$-entry is
$$\mathbb{A}(\mathcal{H})_{i_1i_2\ldots i_k}=\left\{\begin{array}{cc}
                                             \frac{1}{(k-1)!}, & \mbox{ if }
                                             \{i_1,i_2,\ldots,i_k\}\in
                                             E(\mathcal{H}), \\
                                             0, & \mbox{ otherwise} .
                                              \end{array}\right.$$
\end{defn}
Let $\mathbb{D}(\mathcal{H})$ be an order $k$ dimension $n$ diagonal tensor with its diagonal entry $\mathbb{D}_{ii\ldots i}$ being $d_i$,
the degree of vertex $i$, for all $i\in V(\mathcal{H})=[n]$.
Then $\mathbb{Q}(\mathcal{H})=\mathbb{D(\mathcal{H})}+ \mathbb{A(\mathcal{H})}  $ is the signless Laplacian tensor of the hypergraph $\mathcal{H}$.

Clearly, the adjacency tensor and the signless Laplacian tensor of a  hypergraph are nonnegative.
It was proved in \cite{Friedland2013} that a $k$-uniform hypergraph $\mathcal{H}$
is connected if and only if its adjacency tensor $\mathbb{A}(\mathcal{H})$
(and thus the signless Laplacian tensor $\mathbb{Q}(\mathcal{H})$)  is weakly irreducible.
For any vertices $i\in [n]$ of a $k$-uniform hypergraph $\mathcal{H}$,
we take
$$m_i=\frac{\sum\limits_{\{i,i_2, \ldots, i_k\}\in E(\mathcal{H})}d_{i_2}\ldots
d_{i_k}}{d_i^{k-1}},$$
which is a generalization of the average of degrees of
vertices adjacent to $i$ of the ordinary graph.

Recently, several papers studied the spectral radii of the adjacency tensor $\mathbb{A}(\mathcal{H})$
and the signless Laplacian tensor $\mathbb{Q}(\mathcal{H})$
of a $k$-uniform hypergraph $\mathcal{H}$ (see  \cite{2016L, 2015Y} and so on). In this section,
we will apply Theorem \ref{main result 1} 
to the adjacency tensor $\mathbb{A}(\mathcal{H})$
and the signless Laplacian tensor $\mathbb{Q}(\mathcal{H})$ of a $k$-uniform hypergraph $\mathcal{H}$. Some known   and new results about the bounds of $\rho(\mathbb{A}(\mathcal{H}))$ and $\rho(\mathbb{Q}(\mathcal{H}))$ will show.

\begin{them}\label{thm32}
Let $k\geq 3$,  $b_i>0$ for any $i\in [n]$,  and $\mathcal{H}$ be a connected $k$-uniform hypergraph on $n$ vertices. Then
\begin{equation}\label{eq31}
 \min\limits_{e\in E(\mathcal{H})}\min\limits_{\{i,j\}\subseteq e}\sqrt{b^{\prime}_ib^{\prime}_j}\leq \rho(\mathbb{A(\mathcal{H})})\leq
 \max\limits_{e\in E(\mathcal{H})}\max\limits_{\{i,j\}\subseteq e}\sqrt{b^{\prime}_ib^{\prime}_j},
\end{equation}
here for any $i\in [n]$,
$$b^{\prime}_i=b^{-(k-1)}_i\sum\limits_{\{i,i_2,\ldots, i_k\}\in E(\mathcal{H})}b_{i_2}\ldots b_{i_k}.$$
 Moreover, one of the equalities in (\ref{eq31}) holds if and
only if $b^{\prime}_i=b^{\prime}_j$ for any $i,j \in[n]$.
\end{them}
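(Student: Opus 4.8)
The plan is to obtain Theorem \ref{thm32} as a direct specialization of Theorem \ref{main result 1} applied to the adjacency tensor $\mathbb{A}=\mathbb{A}(\mathcal{H})$. First I would check the hypotheses: $\mathbb{A}(\mathcal{H})$ is nonnegative, it has zero diagonal since an edge of a $k$-uniform hypergraph consists of $k$ distinct vertices (so $\mathbb{A}(\mathcal{H})_{i\ldots i}=0$), and it is weakly irreducible because $\mathcal{H}$ is connected. Then I would set $t_i=0$ for every $i$, so that $\mathbb{M}=0$ and $\mathbb{B}=\mathbb{A}(\mathcal{H})$, and choose the free weights $R_i=b_i$.

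With these choices I would compute $S_i$. Because the nonzero entries $\tfrac{1}{(k-1)!}$ of $\mathbb{A}(\mathcal{H})$ range over all $(k-1)!$ orderings of the $k-1$ other vertices of each edge through $i$, and the product $b_{i_2}\cdots b_{i_k}$ is unchanged under reordering, the factor $(k-1)!$ cancels and $S_i=\sum_{\{i,i_2,\ldots,i_k\}\in E(\mathcal{H})}b_{i_2}\cdots b_{i_k}$, whence $S_i/R_i^{k-1}=b'_i$. Substituting $t_i=t_j=0$ into the definition of $F(i,j)$ collapses it to $F(i,j)=\sqrt{b'_i b'_j}$. Finally I would translate the index condition: $j\in N(i)$ holds exactly when $i$ and $j$ lie in a common edge, i.e. $\{i,j\}\subseteq e$ for some $e\in E(\mathcal{H})$. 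Under these identifications the two-sided bound (\ref{eq21}) of Theorem \ref{main result 1} becomes precisely (\ref{eq31}).

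For the equality characterization I would transcribe the two conditions of Theorem \ref{main result 1}. Condition (i), with all $t_i=0$, reads $S_i/R_i^{k-1}=S_j/R_j^{k-1}$, i.e. $b'_i=b'_j$ for all $i,j\in[n]$, which is exactly the claimed criterion. The crux, and the step I expect to be the main obstacle, is to rule out condition (ii) when $k\geq 3$. Here I would exploit the full symmetry of the adjacency tensor: if $\{v_1,\ldots,v_k\}$ is an edge, then every reordering of its vertices yields a nonzero entry. Assuming a partition $[n]=U\cup W$ as in (ii), fix an edge $e=\{v_1,\ldots,v_k\}$; placing $v_1$ first forces (without loss of generality) $v_1\in U$ and $v_2,\ldots,v_k\in W$, while placing $v_2$ first, together with $v_2\in W$, forces $\{v_1,v_3,\ldots,v_k\}\subseteq U$. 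Since $k\geq 3$ the vertex $v_3$ exists, and these two requirements place $v_3$ in both $W$ and $U$, contradicting $U\cap W=\emptyset$. Hence condition (ii) cannot occur for $k\geq 3$, and equality in (\ref{eq31}) holds if and only if (i) holds, namely $b'_i=b'_j$ for all $i,j\in[n]$. (For $k=2$ condition (ii) is exactly bipartiteness and would survive, which explains why the hypothesis $k\geq 3$ is essential.)
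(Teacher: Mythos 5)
Your proposal is correct and takes essentially the same route as the paper's own proof: you specialize Theorem \ref{main result 1} with $t_i=0$, $R_i=b_i$, use the permutation symmetry of $\mathbb{A}(\mathcal{H})$ to cancel the $(k-1)!$ factor in $S_i$ so that $F(i,j)=\sqrt{b'_ib'_j}$, and rule out equality condition (ii) for $k\geq 3$ by exactly the paper's symmetry contradiction (a permuted nonzero entry would force some third vertex into both $U$ and $W$). Your closing observation that condition (ii) survives when $k=2$ (bipartiteness) is a nice extra remark not in the paper, but the argument is otherwise the same.
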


\begin{Proof}
We apply Theorem  \ref{main result 1} to $\mathbb{A}(\mathcal{H})$
and take $R=diag(b_1,b_2,\ldots, b_n)$.
Let $\mathbb{A}=\mathbb{B}=\mathbb{A}(\mathcal{H})$.
Then $t_i=0, a_{ii\ldots i}=0$, $R_i=b_i$ for any $i\in [n]$,
 and $$a_{i_1i_2\ldots i_k}=\left\{\begin{array}{cc}
                                             \frac{1}{(k-1)!}, & \mbox{ if }
                                             \{i_1,i_2,\ldots,i_k\}\in
                                             E(\mathcal{H}); \\
                                             0, & \mbox{ otherwise}.
                                              \end{array}\right.$$
If $a_{ii_2\ldots i_k}\not=0$, then there are $(k-1)!$ entries $a_{is_2\ldots s_k}\not=0$ in $\mathbb{A}(\mathcal{H})$,
where  $s_2\ldots s_k$ is a permutation of $i_2\ldots i_k$. Thus  for any $j\in N(i)$, say, for any $\{i,j\}\subseteq e\in E(\mathcal{H})$, we have
\begin{eqnarray}\begin{split}\label{eq32}
 F(i,j)&=\sqrt{\frac{S_iS_j}{(R_iR_j)^{k-1}}}\\
 &=\sqrt{\frac{\sum\limits_{i_{2},i_{3},\ldots, i_{k}=1}^{n} a_{ii_2\ldots
 i_k}b_{i_2}\cdots b_{i_k}\sum\limits_{j_{2},j_{3},\ldots, j_{k}=1}^{n} a_{jj_2\ldots j_k}b_{j_2}\cdots b_{j_k}}{(b_ib_j)^{k-1}}}\\
&=\sqrt{b^{\prime}_ib^{\prime}_j}.
\end{split}
\end{eqnarray}
Therefore (\ref{eq31}) holds by (\ref{eq213}) and (\ref{eq32}).

Furthermore, $t_i+\frac{S_i}{R_i^{k-1}}=t_j+\frac{S_j}{R_j^{k-1}}$ for any
$i,j\in[n]$ implies $b^{\prime}_i=b^{\prime}_j$  for any $i,j\in[n]$ by the definitions of $S_i$ and $b^{\prime}_i$.

We note that the adjacency tensor of any $k$-uniform hypergraph is a symmetric tensor, say,
 $a_{i_1i_2\ldots i_k}\neq0$ implies  $a_{j_1j_2\ldots j_k}\neq0$ where $j_1j_2\ldots j_k$ is a permutation of $i_1i_2\ldots i_k$.
  But by (ii) of Theorem  \ref{main result 1}, if $a_{i_1i_2\ldots i_k}\neq0$ implies $i_1\in U$, $i_2,\ldots, i_k\in W$ or the vice,
  and then  $a_{i_2i_1i_3\ldots i_k}=0$ by $k\geq 3$, it is a contradiction.

Combining the above arguments,
we know one of the equalities in (\ref{eq31}) holds if and only if $b^{\prime}_i=b^{\prime}_j$ for any $i,j \in[n]$ by Theorem  \ref{main result 1}.
\end{Proof}

\begin{cor}\label{cor33}
Let $k\geq 3$ and $\mathcal{H}$ be a connected $k$-uniform hypergraph on $n$ vertices. Then
\begin{equation}\label{eq33}
 \min\limits_{e\in E(\mathcal{H})}\min\limits_{\{i,j\}\subseteq
e}\sqrt{m_im_j}\leq \rho(\mathbb{A(\mathcal{H})})\leq \max\limits_{e\in
E(\mathcal{H})}\max\limits_{\{i,j\}\subseteq e}\sqrt{m_im_j}.
\end{equation}  Moreover, one of the equalities in (\ref{eq33}) holds if and
only if $m_i=m_j $ for any $i,j \in[n]$
\end{cor}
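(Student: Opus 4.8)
The plan is to obtain Corollary \ref{cor33} directly from Theorem \ref{thm32} by a single, well-chosen specialization of the free weights. Observe that the quantity $m_i$ attached to the hypergraph $\mathcal{H}$ has exactly the structural form of the quantity $b'_i$ appearing in Theorem \ref{thm32}, provided one takes the weights $b_i$ to be the degrees $d_i$. Accordingly, the first step is to apply Theorem \ref{thm32} with $b_i = d_i$ for every $i \in [n]$.

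For this to be legitimate I must verify the hypothesis $b_i > 0$ of Theorem \ref{thm32}, that is, that every degree is positive. This is precisely where connectedness of $\mathcal{H}$ enters: in a connected $k$-uniform hypergraph each vertex lies in at least one edge, so $d_i \geq 1 > 0$, and the substitution $b_i = d_i$ is admissible. With this choice, unwinding the definition of $b'_i$ gives
$$b'_i = d_i^{-(k-1)} \sum_{\{i,i_2,\ldots,i_k\}\in E(\mathcal{H})} d_{i_2}\ldots d_{i_k} = m_i,$$
so that $b'_i$ coincides with $m_i$ for all $i$. Substituting $b'_i = m_i$ into the two-sided estimate (\ref{eq31}) of Theorem \ref{thm32} then yields the bounds (\ref{eq33}).

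It remains to transfer the equality characterization. Theorem \ref{thm32} asserts that one of its inequalities is an equality if and only if $b'_i = b'_j$ for all $i, j \in [n]$; under the identification $b'_i = m_i$ this becomes exactly the condition $m_i = m_j$ for all $i, j \in [n]$, as claimed. There is no genuine obstacle in this argument, since it is a pure specialization of the theorem just proved; the only points deserving attention are the use of connectedness to guarantee $d_i > 0$ so that $d_i$ is a valid weight, together with the routine bookkeeping identity $b'_i = m_i$.
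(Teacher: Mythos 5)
Your proposal is correct and is essentially identical to the paper's own proof: both obtain Corollary \ref{cor33} by specializing Theorem \ref{thm32} with the choice $b_i=d_i$, observing that then $b^{\prime}_i=m_i$ for all $i\in[n]$, so that both the two-sided bound and the equality characterization transfer directly. Your explicit check that connectedness guarantees $d_i>0$ (so the weights are admissible) is a small detail the paper leaves implicit, and it is correctly justified.
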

\begin{Proof}
We apply Theorem \ref{thm32} to $\mathbb{A}(\mathcal{H})$.  Let  $b_i=d_i$ for any $i\in [n]$.
Then $b^{\prime}_i=m_i$ for any $i\in[n]$ and thus the results hold by Theorem \ref{thm32}.
\end{Proof}

\begin{cor}\label{cor34}
Let $k\geq3$ and $\mathcal{H}$ be a connected $k$-uniform hypergraph on $n$ vertices. Then
\begin{equation}\label{eq34}
\min\limits_{e\in E(\mathcal{H})}\min\limits_{\{i,j\}\subseteq
e}\sqrt{d_id_j}\leq \rho(\mathbb{A(\mathcal{H})})\leq \max\limits_{e\in
E(\mathcal{H})}\max\limits_{\{i,j\}\subseteq e}\sqrt{d_id_j}.
\end{equation}
 Moreover, one of the equalities in (\ref{eq34}) holds if and only if $\mathcal{H}$ is a regular hypergraph.
\end{cor}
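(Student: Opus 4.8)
The plan is to obtain this corollary as a direct specialization of Theorem \ref{thm32}, in exactly the same manner that Corollary \ref{cor33} was deduced by setting $b_i = d_i$. Here the natural choice is instead the constant weight $b_i = 1$ for every $i \in [n]$, which is positive and therefore admissible as a hypothesis of Theorem \ref{thm32}.

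First I would substitute $b_i = 1$ into the defining expression $b'_i = b_i^{-(k-1)} \sum_{\{i,i_2,\ldots,i_k\}\in E(\mathcal{H})} b_{i_2}\cdots b_{i_k}$. Since $b_i^{-(k-1)} = 1$ and every product $b_{i_2}\cdots b_{i_k}$ equals $1$, the remaining sum merely counts the edges of $\mathcal{H}$ containing the vertex $i$, so that $b'_i = d_i$ for each $i$. Plugging this identity into the two-sided estimate (\ref{eq31}) of Theorem \ref{thm32} and using $\sqrt{b'_i b'_j} = \sqrt{d_i d_j}$ yields the bound (\ref{eq34}) at once.

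For the equality characterization I would appeal to the equality clause of Theorem \ref{thm32}, according to which one of the inequalities in (\ref{eq31}) is attained if and only if $b'_i = b'_j$ for all $i, j \in [n]$. Under the present substitution this condition becomes $d_i = d_j$ for all $i, j \in [n]$, which by the definition recalled at the start of Section 3 is precisely the statement that $\mathcal{H}$ is a regular hypergraph.

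Since the argument is purely a matter of specialization and bookkeeping, I anticipate no genuine obstacle. The only points meriting a moment of care are verifying that the constant choice $b_i = 1$ satisfies the positivity hypothesis $b_i > 0$ of Theorem \ref{thm32}, and checking that the collapse $b'_i = d_i$ holds simultaneously for every vertex, so that the equality condition $b'_i = b'_j$ translates cleanly into the uniform-degree condition defining regularity.
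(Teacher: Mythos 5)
Your proposal is correct and coincides exactly with the paper's own proof: the authors likewise apply Theorem \ref{thm32} with the constant weight $b_i=1$, observe that $b'_i=d_i$, and read off both the bounds and the equality characterization (regularity) from the equality clause of that theorem. Nothing further is needed.
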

\begin{Proof}
We apply Theorem  \ref{thm32} to $\mathbb{A}(\mathcal{H})$ and take $b_i=1$ for each $i\in [n]$.
 Then $b^{\prime}_i=d_i$ for any $i\in[n]$ and thus the results hold by Theorem \ref{thm32}.
\end{Proof}

\begin{rem}\label{rem35}
The right inequality in Corollary \ref{cor33} and Corollary \ref{cor34} is the result of Remark 14 in \cite{2015Y}.
\end{rem}

\begin{them}\label{thm36}
Let $k\geq 3$,  $b_i>0$ for any $i\in [n]$,  and $\mathcal{H}$ be a connected $k$-uniform hypergraph on $n$ vertices.
Then
\begin{equation}\label{eq35}
\min\limits_{e\in E(\mathcal{H})}\min\limits_{\{i,j\}\subseteq e}g(i,j)\leq\rho(\mathbb{Q(\mathcal{H})})\leq\max\limits_{e\in E(\mathcal{H})}\max\limits_{\{i,j\}\subseteq e}g(i,j),
\end{equation}
where $$g(i,j)=\frac{d_i+d_j+\sqrt{(d_i-d_j)^2+4b^{\prime}_ib^{\prime}_j}}{2} ,$$
$$b^{\prime}_i=b^{-(k-1)}_i\sum\limits_{\{i,i_2,\ldots, i_k\}\in E(\mathcal{H})}b_{i_2}\ldots b_{i_k}.$$
Moreover, one of the equalities in (\ref{eq35}) holds if and
only if $d_i+b^{\prime}_i=d_j+b^{\prime}_j$ for any $i,j \in[n]$.
\end{them}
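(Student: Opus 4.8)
The plan is to apply Theorem \ref{main result 1} directly to the signless Laplacian tensor, following the same template as the proof of Theorem \ref{thm32}, except that the diagonal shift now carries the degree sequence rather than vanishing. Concretely, I would set $\mathbb{A}=\mathbb{A}(\mathcal{H})$, take $\mathbb{M}=\mathbb{D}(\mathcal{H})$ so that $t_i=d_i$ for every $i$, and choose $R=diag(b_1,\ldots,b_n)$ with $R_i=b_i$. Then $\mathbb{B}=\mathbb{A}+\mathbb{M}=\mathbb{Q}(\mathcal{H})$, and since $\mathcal{H}$ is connected, $\mathbb{A}(\mathcal{H})$ is weakly irreducible, so the hypotheses of Theorem \ref{main result 1} are satisfied.

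The first computational step is to identify $S_i/R_i^{k-1}$ with $b'_i$. Because the adjacency tensor assigns the value $\frac{1}{(k-1)!}$ to each of the $(k-1)!$ orderings of the remaining vertices of an edge containing $i$, summing against the weights $b_{i_2}\cdots b_{i_k}$ collapses to $S_i=\sum_{\{i,i_2,\ldots,i_k\}\in E(\mathcal{H})}b_{i_2}\cdots b_{i_k}$, exactly as in (\ref{eq32}). Dividing by $R_i^{k-1}=b_i^{k-1}$ gives $S_i/R_i^{k-1}=b'_i$, and hence $\frac{S_iS_j}{(R_iR_j)^{k-1}}=b'_ib'_j$. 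Substituting $t_i=d_i$ together with this identity into the definition of $F(i,j)$ yields $F(i,j)=g(i,j)$. Since for the adjacency tensor $j\in N(i)$ is equivalent to the existence of an edge $e$ with $\{i,j\}\subseteq e$, the two-sided bound (\ref{eq21}) of Theorem \ref{main result 1} translates verbatim into (\ref{eq35}).

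For the equality characterization I would invoke the two conditions of Theorem \ref{main result 1}. Condition (i), namely $t_i+\frac{S_i}{R_i^{k-1}}=t_j+\frac{S_j}{R_j^{k-1}}$ for all $i,j$, becomes precisely $d_i+b'_i=d_j+b'_j$ for all $i,j\in[n]$ after the substitution above. The remaining task is to rule out condition (ii), and this is the only place where a genuine argument is needed: the adjacency tensor is symmetric, so a nonzero entry $a_{i_1i_2\ldots i_k}$ forces every permuted entry to be nonzero as well. If condition (ii) held with a bipartition $[n]=U\cup W$, then a nonzero entry with $i_1\in U$ and $i_2,\ldots,i_k\in W$ would, after swapping $i_1$ with $i_2$, produce a nonzero entry whose index $i_3$ must lie in both $W$ and $U$; since $k\geq 3$ such an index $i_3$ exists, contradicting $U\cap W=\phi$. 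This is exactly the obstruction already exploited in the proof of Theorem \ref{thm32}, so condition (ii) is vacuous for $k\geq 3$. Therefore equality in (\ref{eq35}) holds if and only if $d_i+b'_i=d_j+b'_j$ for all $i,j\in[n]$, which finishes the proof. The whole argument is a routine specialization of Theorem \ref{main result 1}; the one delicate point is the symmetry-plus-$k\geq 3$ exclusion of the bipartite equality case, which I would carry out exactly as above.
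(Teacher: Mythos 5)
Your proposal is correct and follows essentially the same route as the paper's own proof: apply Theorem \ref{main result 1} with $\mathbb{A}=\mathbb{A}(\mathcal{H})$, $t_i=d_i$, $R_i=b_i$, identify $S_i/R_i^{k-1}=b'_i$ so that $F(i,j)=g(i,j)$ with $j\in N(i)$ matching $\{i,j\}\subseteq e\in E(\mathcal{H})$, and exclude condition (ii) by the symmetry of the adjacency tensor together with $k\geq 3$, exactly as the paper does by referring back to the proof of Theorem \ref{thm32}. Your spelled-out permutation argument (swapping $i_1$ and $i_2$ to force $i_3\in U\cap W$) is precisely the contradiction the paper invokes, so there is nothing to add.
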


\begin{Proof}
We apply Theorem  \ref{main result 1} to $\mathbb{Q}(\mathcal{H})$
and take $R=diag(b_1,b_2,\cdots,b_n)$.
 Let $\mathbb{A}=\mathbb{A}(\mathcal{H})$ and $\mathbb{B}=\mathbb{Q}(\mathcal{H})$.
Then $t_i=d_i, a_{ii\ldots i}=0$, $R_i=b_i$ for any $i\in [n]$.
Thus for any $\{i,j\}\subseteq e\in E(\mathcal{H})$, we have
\begin{eqnarray}\begin{split}\label{eq36}
 \frac{t_i+t_j +\sqrt{(t_i-t_j)^2+\frac{4S_iS_j}{(R_iR_j)^{m-1}}}}{2}
 =\frac{d_i+d_j+\sqrt{(d_i-d_j)^2+4b^{\prime}_ib^{\prime}_j}}{2}.
\end{split}
\end{eqnarray}
Therefore (\ref{eq35}) holds by (\ref{eq213}) and (\ref{eq36}).

Furthermore, $t_i+\frac{S_i}{R_i^{k-1}}=t_j+\frac{S_j}{R_j^{k-1}}$ for any
$i,j\in[n]$ implies $d_i+b^{\prime}_i=d_j+b^{\prime}_j$  for any $i,j\in[n]$ by the definition of $S_i$ and $b^{\prime}_i$.

Similar to the proof of Theorem \ref{thm32}, we know the condition (ii) of Theorem  \ref{main result 1} will not hold.
Thus one of the equalities in (\ref{eq35}) holds if and only if $d_i+b^{\prime}_i=d_j+b^{\prime}_j$ for any $i,j \in[n]$ by Theorem  \ref{main result 1}.
\end{Proof}

\begin{rem}\label{rem37}
The right inequality in Theorem \ref{thm36}  is the result of Theorem 11 in \cite{2015Y}.
\end{rem}

\begin{cor}\label{cor38}
Let $k\geq3 $ and $\mathcal{H}$ be a connected $k$-uniform hypergraph on $n$ vertices. Then
\begin{equation}\label{eq37}
 \min\limits_{e\in E(\mathcal{H})}\min\limits_{\{i,j\}\subseteq e}h(i,j)\leq
\rho(\mathbb{Q(\mathcal{H})})\leq \max\limits_{e\in
E(\mathcal{H})}\max\limits_{\{i,j\}\subseteq e}h(i,j),
\end{equation}
where
 $$h(i,j)=\frac{d_i+d_j+\sqrt{(d_i-d_j)^2+4m_im_j}}{2}.$$
Moreover, one of the equalities in (\ref{eq37}) holds if and only if $d_i+m_i=d_j+m_j$ for all $i,j\in [n]$.
\end{cor}

\begin{Proof}
We apply Theorem \ref{thm36} to $\mathbb{Q}(\mathcal{H}).$  Let $b_i=d_i$ for any $i\in [n]$.
Then $b^{\prime}_i=m_i$ for any $i\in[n]$ and thus the results hold by Theorem \ref{thm36}.
\end{Proof}

\begin{rem}\label{rem39}
The right inequality in Corollary \ref{cor38}  is the result of Corollary 13 in \cite{2015Y}.
\end{rem}

\begin{cor}\label{cor310}
Let $k\geq3$ and $\mathcal{H}$ be a connected $k$-uniform hypergraph on $n$ vertices. Then
\begin{equation}\label{eq38}
 \min\limits_{e\in E(\mathcal{H})}\min\limits_{\{i,j\}\subseteq e}(d_i+d_j)\leq
\rho(\mathbb{Q(\mathcal{H})})\leq \max\limits_{e\in
E(\mathcal{H})}\max\limits_{\{i,j\}\subseteq e}(d_i+d_j).
\end{equation}
Moreover, one of the equalities in (\ref{eq38}) holds if and only if $\mathcal{H}$ is a regular hypergraph. 
\end{cor}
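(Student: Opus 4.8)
The plan is to derive this as the special case $b_i=1$ of Theorem \ref{thm36}, exactly mirroring how Corollary \ref{cor34} specializes Theorem \ref{thm32}. First I would apply Theorem \ref{thm36} to $\mathbb{Q}(\mathcal{H})$ with the weight vector $b_i=1$ for every $i\in[n]$, which is legitimate because $\mathcal{H}$ connected forces $\mathbb{Q}(\mathcal{H})$ to be nonnegative weakly irreducible. Under this choice the quantity
$$b^{\prime}_i=b^{-(k-1)}_i\sum\limits_{\{i,i_2,\ldots,i_k\}\in E(\mathcal{H})}b_{i_2}\ldots b_{i_k}$$
collapses to $b^{\prime}_i=\sum\limits_{\{i,i_2,\ldots,i_k\}\in E(\mathcal{H})}1=d_i$, since the number of edges incident to $i$ is precisely its degree $d_i$.

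The second step is the algebraic simplification of $g(i,j)$ under $b^{\prime}_i=d_i$. Substituting gives
$$g(i,j)=\frac{d_i+d_j+\sqrt{(d_i-d_j)^2+4d_id_j}}{2}=\frac{d_i+d_j+\sqrt{(d_i+d_j)^2}}{2}=d_i+d_j,$$
where I use the identity $(d_i-d_j)^2+4d_id_j=(d_i+d_j)^2$ together with the fact that $d_i+d_j>0$ (every vertex of a connected hypergraph possessing an edge has positive degree). Inserting $g(i,j)=d_i+d_j$ into the bounds (\ref{eq35}) of Theorem \ref{thm36} immediately yields (\ref{eq38}).

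For the equality characterization I would invoke the ``moreover'' clause of Theorem \ref{thm36}, which asserts that equality holds in (\ref{eq35}) if and only if $d_i+b^{\prime}_i=d_j+b^{\prime}_j$ for all $i,j\in[n]$. With $b^{\prime}_i=d_i$ this reads $2d_i=2d_j$, that is $d_i=d_j$ for every pair, which is exactly the statement that $\mathcal{H}$ is regular. I do not expect any genuine obstacle here: the entire argument is a direct specialization of Theorem \ref{thm36}, and the only nonroutine observation is that the discriminant inside $g$ becomes a perfect square, so that the a priori nonlinear bound degenerates to the clean linear expression $d_i+d_j$ and the equality condition degenerates to plain regularity.
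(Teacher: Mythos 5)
Your proposal is correct and follows exactly the paper's own route: the paper likewise proves Corollary \ref{cor310} by applying Theorem \ref{thm36} with $b_i=1$, noting $b^{\prime}_i=d_i$, and reading off both the bounds and the equality characterization. Your extra details --- the perfect-square simplification $(d_i-d_j)^2+4d_id_j=(d_i+d_j)^2$ and the reduction of $d_i+b^{\prime}_i=d_j+b^{\prime}_j$ to regularity --- are precisely the steps the paper leaves implicit, and they are carried out correctly.
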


\begin{Proof}
We apply Theorem  \ref{thm36} to $\mathbb{Q}(\mathcal{H}),$ and take $b_i=1$ for each $i\in [n]$.
 Then $b^{\prime}_i=d_i$ for any $i\in[n]$ and thus the results hold by Theorem \ref{thm36}.
\end{Proof}

\begin{rem}\label{rem311}
The right inequality in Corollary \ref{cor310}  is the result of Corollary 12 in \cite{2015Y}.
\end{rem}

\end{document}